\documentclass[11pt]{article}

\title{Symplectic embeddings from concave toric domains into convex ones}
 
\author{Dan Cristofaro-Gardiner\footnote{Mathematics Department, University of California, Santa Cruz, Santa Cruz CA, USA.  \newline  Electronic address: dcristof@ucsc.edu.}}
\date{}

\usepackage{amssymb}
\usepackage{enumerate}
\usepackage{latexsym}
\usepackage{amsmath}
\usepackage{amsthm}
\usepackage{amscd}
\usepackage{mathabx}
\usepackage{graphicx}
\multlinegap=30pt

\numberwithin{equation}{section}
\numberwithin{figure}{section}

\newtheorem{theorem}{Theorem}[section]
\newtheorem{proposition}[theorem]{Proposition}
\newtheorem{corollary}[theorem]{Corollary}

\newtheorem{lemma-definition}[theorem]{Lemma-Definition}

\theoremstyle{definition}
\newtheorem{definition}[theorem]{Definition}
\newtheorem{remark}[theorem]{Remark}

\newtheorem{example}[theorem]{Example}

\newcommand{\eqdef}{\;{:=}\;}

\newcommand{\C}{{\mathbb C}}

\newcommand{\R}{{\mathbb R}}

\newcommand{\op}{\operatorname}

\newcommand{\vu}{\nu}

\newcommand{\bpm}{\begin{pmatrix}}
\newcommand{\epm}{\end{pmatrix}}

\renewcommand{\epsilon}{\varepsilon}

\begin{document}

\setcounter{tocdepth}{2}

\maketitle

\begin{abstract}
Embedded contact homology gives a sequence of obstructions to four-dimensional symplectic embeddings, called ECH capacities.  In ``Symplectic embeddings into four-dimensional concave toric domains", the author, Choi, Frenkel, Hutchings and Ramos computed the ECH capacities of all ``concave toric domains", and showed that these give sharp obstructions in several interesting cases.   We show that these obstructions are sharp for all symplectic embeddings of concave toric domains into ``convex" ones.  In an appendix with Choi, we prove a new formula for the ECH capacities of convex toric domains, which shows that they are determined by the ECH capacities of a corresponding collection of balls.
\end{abstract}

\section{Introduction}

\subsection{The main theorem}

It is an interesting problem to determine when one symplectic manifold embeds into another.  In dimension 4, Hutchings' ``ECH capacities" \cite{qech} give one tool for studying this question.  ECH capacities are a certain sequence of nonnegative (possibly infinite) real numbers 
\[ 0 = c_0(X,\omega) \le \ldots \le c_k(X,\omega) \le \infty\] associated to any symplectic four-manifold $(X,\omega)$.  The key property they satisfy is the Monotonicity Axiom: if there exists a symplectic embedding
\[ (M_1,\omega_1) \to (M_2,\omega_2),\]
then we must have 
\begin{equation}
\label{eqn:monotonicityequation}
c_k(M_1) \le c_k(M_2)
\end{equation} for all $k$.  ECH capacities therefore give an obstruction to symplectically embedding one symplectic $4$-manifold into another.  

In \cite{concave}, the author, Choi, Frenkel, Hutchings, and Ramos used ECH capacities to study symplectic embeddings of ``toric domains".  A {\em toric domain} $X_{\Omega}$ is the preimage of a region $\Omega \subset \R^2$ in the first quadrant under the map
\[ \mu: \C^2 \to \R^2, \quad \quad (z_1,z_2) \to (\pi|z_1|^2,\pi|z_2|^2).\]
Toric domains generalize {\em ellipsoids}
\[ E(a,b) = \left\lbrace (z_1,z_2) | \frac{\pi|z_1|^2}{a} + \frac{\pi|z_2|^2}{b} \le 1 \right\rbrace,\]
 where $\Omega$ is a right triangle with legs on the axes, {\em balls} $B(c) \eqdef E(c,c)$, and {\em polydisks}
 \[ P(a,b) = \left\lbrace (z_1,z_2) | \frac{\pi|z_1|^2}{a} < 1,  \frac{\pi|z_2|^2}{b} \le 1\right\rbrace,\]
where $\Omega$ is a rectangle whose bottom and left sides are on the axes.  The paper \cite{concave} computed the ECH capacities of all ``concave" toric domains, and showed that these give sharp obstructions in several interesting cases, for example for all ball packings into certain unions of an ellipsoid and a cylinder.  The aim of the present article is to identify a large and natural class of embedding problems involving toric domains for which ECH capacities give a sharp obstruction.  It turns out that in these cases, ECH capacities can be computed purely combinatorially, and so give considerable insight into the corresponding embedding problem.       

\begin{figure}[t]
\label{fig:illustration}
\centering
\includegraphics[scale=.5]{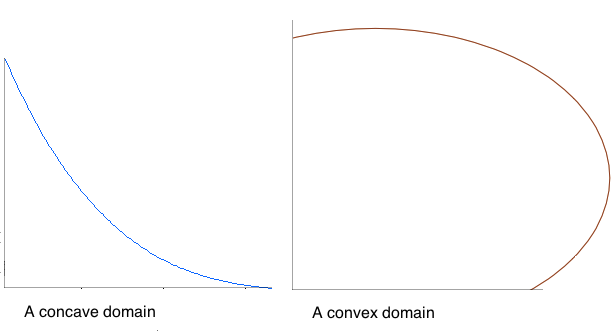}
\caption{A concave toric domain and a convex one}
\end{figure}

To state our main theorem, first recall the ``concave toric domains" from \cite{concave}.  These were defined as toric domains $X_{\Omega}$, where $\Omega$ is a region in the first quadrant underneath the graph of a convex function $f: [0,a] \to [0,b]$, such that $a$ and $b$ are positive real numbers, $f(0) = b$, and $f(a) = 0$. We call such an $\Omega$ a {\em concave} subset of the first quadrant of $\mathbb{R}^2$.    

We now define a related concept, see Figure~\ref{fig:illustration}.

\begin{definition} 
\label{defn:convextoric}
\label{def:convex} A {\em convex toric domain} is a toric domain $X_{\Omega}$, where $\Omega$ is a closed region in the first quadrant bounded by the axes and a convex curve from $(a,0)$ to $(0,b)$, for $a$ and $b$ positive real numbers. 
\end{definition}

Similarly to above, we call such an $\Omega$ a {\em convex} subset of the first quadrant.  Note that our definition of convex toric domain differs slightly from the definition in \cite{hutchings}.
 
If $X$ is a symplectic four-manifold, let $c_k(X,\omega)$ denote the $k^{th}$ ECH capacity of $X$, reviewed in \S\ref{sec:echreview}.  We can now state the main theorem of this paper:

\begin{theorem} 
\label{thm:maintheorem}
Let $X_{\Omega_1}$ be a concave toric domain and let $X_{\Omega_2}$ be a convex toric domain.  Then there exists a symplectic embedding
\[\op{int}(X_{\Omega_1}) \to \op{int}(X_{\Omega_2})\]
if and only if 
\[ c_k(\op{int}(X_{\Omega_1})) \le c_k(\op{int}(X_{\Omega_2}))\]
for all nonnegative integers $k$.   
\end{theorem}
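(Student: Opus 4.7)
The only-if direction is immediate from the monotonicity of ECH capacities under symplectic embeddings of open subsets. The substance of the theorem is the if direction, for which the plan is to reduce the embedding problem, on both sides, to a ball-packing problem, and then to invoke McDuff's theorem that ECH capacities give a sharp obstruction to packing balls into a ball.

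On the concave side, the weight expansion from \cite{concave} handles this reduction: associated to $X_{\Omega_1}$ is a (possibly infinite) sequence of positive reals $(a_i)$ such that, up to arbitrarily small perturbation, $\bigsqcup_i \op{int}\, B^4(a_i)$ embeds symplectically into $\op{int}(X_{\Omega_1})$ with complement of vanishing measure, and such that the ECH capacities of $X_{\Omega_1}$ equal those of the disjoint union $\bigsqcup_i B^4(a_i)$. Consequently, an embedding $\op{int}(X_{\Omega_1}) \hookrightarrow \op{int}(X_{\Omega_2})$ exists if and only if, after arbitrarily small shrinkings, the balls $B^4(a_i)$ pack symplectically into $\op{int}(X_{\Omega_2})$.

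On the convex side, the formula proved in the appendix (joint with Choi) expresses the ECH capacities of a convex toric domain $X_{\Omega_2}$ in terms of those of a corresponding collection of balls. Interpreting this geometrically, from the boundary curve of $\Omega_2$ one extracts data $(R; b_1, b_2, \ldots)$ such that $X_{\Omega_2}$ is realized, up to arbitrarily small loss of measure, as the symplectic complement of a ball packing $\bigsqcup_j B^4(b_j) \hookrightarrow B^4(R)$. Combining the two reductions, an embedding $\op{int}(X_{\Omega_1}) \hookrightarrow \op{int}(X_{\Omega_2})$ becomes equivalent, after arbitrarily small perturbations, to a symplectic ball packing $\bigsqcup_i B^4(a_i) \sqcup \bigsqcup_j B^4(b_j) \hookrightarrow B^4(R)$.

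The final step is to invoke McDuff's theorem on ball packings: a disjoint union of balls packs symplectically into $B^4(R)$ if and only if the classical ball-packing inequalities (arising from Seiberg--Witten theory on blowups of $\mathbb{CP}^2$, and equivalent to the relevant ECH capacity inequalities) hold. What remains is to verify that these combinatorial inequalities on $(a_i, b_j, R)$ are exactly equivalent, via the two formulas above, to the hypothesis $c_k(\op{int}(X_{\Omega_1})) \le c_k(\op{int}(X_{\Omega_2}))$ for all $k$. I expect this bookkeeping --- translating back and forth between ECH capacities of the original domains and the ball-packing inequalities for $(a_i, b_j, R)$ --- to be the main technical obstacle, together with managing the limits needed when the weight expansions involve infinitely many balls.
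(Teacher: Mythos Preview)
Your overall strategy matches the paper's: reduce to a ball-packing problem via weight expansions on both sides, then invoke sharpness of ECH capacities for ball packings of a ball. However, you have inverted the difficulty.

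The step you flag as ``the main technical obstacle'' --- translating the hypothesis $c_k(X_{\Omega_1}) \le c_k(X_{\Omega_2})$ into the ball-packing inequality --- is in fact short and does not require the appendix formula at all. The Traynor trick gives embeddings $\op{int}(B(\Omega_1)) \hookrightarrow \op{int}(X_{\Omega_1})$ and $\op{int}(X_{\Omega_2}) \sqcup \op{int}(\widehat{B}(\Omega_2)) \hookrightarrow \op{int}(B(b))$; Monotonicity and the Disjoint Union axiom then immediately yield $c_k\bigl(\op{int}(B(\Omega_1)) \sqcup \op{int}(\widehat{B}(\Omega_2))\bigr) \le c_k(B(b))$ from the hypothesis. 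No explicit formula for $c_k(X_{\Omega_2})$ is needed, and there is no delicate bookkeeping.

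The genuine gap is the step you pass over in one sentence: ``Combining the two reductions, an embedding $\op{int}(X_{\Omega_1}) \hookrightarrow \op{int}(X_{\Omega_2})$ becomes equivalent \ldots\ to a symplectic ball packing.'' Only the forward direction of this equivalence follows from your reductions. The converse --- that a packing $\bigsqcup_i B(a_i) \sqcup \bigsqcup_j B(b_j) \hookrightarrow B(R)$ produces an embedding $X_{\Omega_1} \hookrightarrow X_{\Omega_2}$ --- does \emph{not} follow from the geometric descriptions you cite. Knowing that $X_{\Omega_2}$ is (approximately) the complement of one \emph{particular} packing of the $B(b_j)$ does not let you place the $B(a_i)$ in that complement: the hypothesized packing may position the $B(b_j)$ entirely differently, and likewise the $B(a_i)$ need not sit in any configuration that can be reassembled into $X_{\Omega_1}$. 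In the paper this converse is Theorem~\ref{thm:keytheorem}, and it is the heart of the argument: it requires an iterated blowup construction turning each domain into a chain of symplectic spheres, the singular inflation procedure of McDuff--Opshtein to deform the symplectic form along a $J$-holomorphic representative found via Seiberg--Witten theory, and a connectivity result (Proposition~\ref{prop:connectivity}) to handle limits and irrational domains. Without this machinery --- or a substitute of comparable strength --- your outline has no mechanism for producing the embedding from the ball packing.
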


We also remark that when an embedding of a concave toric domain into a convex one exists, it is unique up to isotopy, see Proposition~\ref{prop:connectivity}.  This is definitely not true for many other toric domains, see for example \cite{guttusher}.

Note that an ellipsoid is both concave and convex, while a polydisc is convex.  Thus, Theorem~\ref{thm:maintheorem} generalizes well-known results of McDuff \cite{m2} (where $X_{\Omega_1}$ and $X_{\Omega_2}$ are both ellipsoids) and Frenkel-M\"uller \cite{fm} (where $X_{\Omega_1}$ is an ellipsoid and $X_{\Omega_2}$ is a polydisc).  As mentioned above, a purely combinatorial formula for the ECH capacities of concave toric domains was given in  \cite{concave}.  In the appendix, we give a formula for the ECH capacities of convex domains that generalizes the formula from \cite[Thm. 1.11]{hutchings}, see Corollary~\ref{cor:convexcorollary}.  These formulas involve counting lattice points in polygons, and the combinatorics involved can be interesting \cite{cga, ms, cgfs}.

Here is an example of how one can use Theorem~\ref{thm:maintheorem}:

\begin{example}
\label{ex:mainexample}

Let $X_{\Omega_1}$ be an ellipsoid and let $X_{\Omega_2}$ be the convex toric domain associated to a closed symplectic toric four-manifold $X$.  This means that $\Omega_2$ is a Delzant polygon for $X$ (note that any Delzant polygon is affine equivalent to a polygon $\Omega_2$ which is convex in the sense of Definition~\ref{def:convex}).  Then $X$ contains the convex toric domain $X_{\Omega_2}$, so Theorem~\ref{thm:maintheorem} can be used to construct embeddings of ellipsoids into $X$.  In fact, it is shown in \cite{cghmp} that an ellipsoid embeds into $X$ if and only if it embeds into $X_{\Omega_2}$.  Thus, Theorem~\ref{thm:maintheorem} can be used to understand exactly when an ellipsoid embeds into a closed symplectic toric four-manifold.  This is studied in \cite{cghmp}, and the stabilized version of this embedding problem (as in \cite{cghi, cgm}) could be interesting to study as well. 
\end{example}

More examples are given in \S\ref{sec:examples}. 
    
\begin{remark}
\label{rmk:sharpness}

One could try to extend Theorem~\ref{thm:maintheorem} to other classes of toric domains.  However, it is important to note that ECH capacities definitely do not always give sharp obstructions to symplectic embeddings, even for toric domains.  A notable example of this is given by Hind and Lisi in \cite[Thm. 1.1]{hindlisi}, where it is shown that a polydisc $P(1,2)$ can be symplectically embedded into a ball $B(a)$ if and only if $a \ge 3$; ECH capacities only give the obstruction $a \ge 2.$  Interestingly, recent work of Hutchings \cite{hutchings} shows that embedded contact homology can still be used to derive strong obstructions to symplectic embeddings, even when the obstructions coming from ECH capacities are weak.  For example, in \cite{hutchings} Hutchings defines new obstructions to embedding one convex toric domain into another that can be used to recover the result of Hind and Lisi from above.  It is an interesting open question to determine how sharp these new obstructions are, for example for symplectic embeddings of one four-dimensional polydisc into another.  
\end{remark}

\subsection{Idea of the proof and relationship with previous work}
\label{sec:previous}

As mentioned above, McDuff showed that ECH capacities give a sharp obstruction to symplectically embedding one four-dimensional ellipsoid into another.  Here we use a similar method.  

Central to both methods is the {\em symplectic ball-packing problem}; for target a ball, this is the question of whether or not there exists a symplectic embedding
\[ \coprod_i B(a_i) \to B(\lambda)\]
for positive real numbers $\lambda, a_1, \ldots, a_n$.  McDuff showed in \cite{m1} that the question of whether or not one rational ellipsoid can be symplectically embedded into another is equivalent to the question of whether or not a certain symplectic ball packing of a ball exists.  In \cite{m4}, it was then shown that since ECH capacities are known to give a sharp obstruction to all four-dimensional symplectic ball packing problems of a ball, they give sharp obstructions to ellipsoid embeddings as well.   Here we first show that the question of  embedding a ``rational" concave toric domain into a rational convex one is equivalent to a certain symplectic ball packing problem, see Theorem~\ref{thm:keytheorem} for the precise statement, and we then use this to show that ECH capacities give a sharp obstruction to embedding a concave domain into a convex one.

\subsection{Connectivity of the space of embeddings}

McDuff also showed in \cite{m1} that the space of embeddings of one ellipsoid into another is connected.  To prove Theorem~\ref{thm:maintheorem} and Theorem~\ref{thm:keytheorem}, it will be helpful to show that this also holds for embeddings of a concave domain into a convex one:

\begin{proposition}
\label{prop:connectivity}
Let $X_{\Omega_1}$ be a concave toric domain, let $X_{\Omega_2}$ be a convex toric domain, and let $g_0$ and $g_1$ be two symplectic embeddings:
\[ X_{\Omega_1} \to \op{int}(X_{\Omega_2}).\]
Then there exists an isotopy \[\lbrace \Psi_t \rbrace_{0 \le t \le 1} : \op{int}(X_{\Omega_2}) \to \op{int}(X_{\Omega_2})\] 
such that $\Psi_0=\op{id}$ and $\Psi_1(g_1)=g_0$.  
\end{proposition}

The following corollary will be particularly useful:

\begin{corollary}
\label{cor:nonrational}
Let $X_{\Omega_1}$ be a concave domain and let $X_{\Omega_2}$ be convex.  Then there is a symplectic embedding
\[ \op{int}(X_{\Omega_1}) \to \op{int}(X_{\Omega_2})\]
if and only if   
there is a symplectic embedding
\[ X_{\lambda \Omega_1} \to \op{int}(X_{\Omega_2})\]
for all $\lambda < 1$.
\end{corollary}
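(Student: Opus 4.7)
The ``only if'' direction is immediate: if $g : \op{int}(X_{\Omega_1}) \to \op{int}(X_{\Omega_2})$ is a symplectic embedding, then for every $\lambda < 1$ the compact set $X_{\lambda \Omega_1}$ lies in $\op{int}(X_{\Omega_1})$, so the restriction $g|_{X_{\lambda \Omega_1}}$ is the desired embedding.

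For the ``if'' direction, the plan is to use Proposition~\ref{prop:connectivity} as a gluing tool, combining the given embeddings of progressively larger exhaustions of $\op{int}(X_{\Omega_1})$ into a single embedding of $\op{int}(X_{\Omega_1})$. Fix a sequence $\lambda_n \in (0,1)$ with $\lambda_n \to 1$, and by hypothesis pick a symplectic embedding $h_n : X_{\lambda_n \Omega_1} \to \op{int}(X_{\Omega_2})$ for each $n$. I will inductively construct a compatible family $f_n : X_{\lambda_n \Omega_1} \to \op{int}(X_{\Omega_2})$ satisfying $f_{n+1}|_{X_{\lambda_n \Omega_1}} = f_n$. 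Set $f_1 := h_1$. Given $f_n$, observe that $X_{\lambda_n \Omega_1}$ is itself a concave toric domain (scaling by $\lambda_n$ preserves concavity), so Proposition~\ref{prop:connectivity} applies to the two embeddings $f_n$ and $h_{n+1}|_{X_{\lambda_n \Omega_1}}$ of $X_{\lambda_n \Omega_1}$ into $\op{int}(X_{\Omega_2})$. This produces an ambient isotopy $\Psi^{(n)}_t$ of $\op{int}(X_{\Omega_2})$ with $\Psi^{(n)}_0 = \op{id}$ and $\Psi^{(n)}_1 \circ h_{n+1}|_{X_{\lambda_n \Omega_1}} = f_n$. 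Define $f_{n+1} := \Psi^{(n)}_1 \circ h_{n+1}$ on all of $X_{\lambda_{n+1} \Omega_1}$; by construction this extends $f_n$.

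To conclude, I would note that $\Omega_1$ is star-shaped with respect to the origin, since its upper boundary is the graph of a non-increasing convex function $f$ with $f(0)=b$ and $f(a)=0$ (non-increasingness follows from convexity together with $f\ge 0$ and $f(a)=0$). Consequently every point of $\op{int}(X_{\Omega_1})$ lies in $X_{\lambda \Omega_1}$ for some $\lambda < 1$, so $\op{int}(X_{\Omega_1}) = \bigcup_n X_{\lambda_n \Omega_1}$, and the compatible family $\{f_n\}$ assembles into a single symplectic embedding $F : \op{int}(X_{\Omega_1}) \to \op{int}(X_{\Omega_2})$. The main obstacle is not really in this corollary at all: all of the substantive work is pushed into Proposition~\ref{prop:connectivity}, and given that proposition the corollary is just a standard exhaustion-plus-connectivity argument.
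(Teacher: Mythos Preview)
Your proposal is correct and follows essentially the same approach as the paper: the paper's proof also picks a sequence $\lambda_n = 1-1/n$, invokes Proposition~\ref{prop:connectivity} to arrange that the resulting embeddings are nested, and then takes the direct limit. You have simply spelled out in detail what the paper compresses into a sentence (in particular, how the ambient isotopy from Proposition~\ref{prop:connectivity} is used to build the nested family, and why star-shapedness of $\Omega_1$ ensures the $X_{\lambda_n\Omega_1}$ exhaust $\op{int}(X_{\Omega_1})$).
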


\subsection{ECH capacities of convex domains and ECH capacities of balls}

As explained in \S\ref{sec:previous}, the fact that ECH capacities are sharp for these embedding problems essentially follows from the fact that they are sharp for symplectic ball packings of a ball.  In fact, the ECH capacities of both of these domains are closely related to the ECH capacities of balls.  In \cite{concave}, it was shown that the ECH capacities of any concave toric domain are determined by the ECH capacities of a certain collection of balls, see \cite[Thm. 1.4]{concave} for the precise statement.  In an appendix with Choi, we show that this is also true for convex toric domains, see Theorem~\ref{thm:echcapacities}.

\vspace{5 mm}

{\bf Acknowledgements.}  It is a pleasure to thank Dusa McDuff for her encouragement and her ideas, and for her help with this work.  I was motivated to think about Theorem~\ref{thm:maintheorem} because of a suggestion by McDuff.  I also thank Michael Hutchings and Emmanuel Opshtein for their encouragement.  It is a pleasure to thank Keon Choi and Felix Schlenk for valuable conversations as well.  I also thank the anonymous referee for valuable feedback.  
     
Part of the work for this project was completed while the author was visiting the Institute for Advanced Study.  I thank the Institute for their generous support, part of which came from NSF grant DMS-1128155.  I also benefited from discussions with McDuff and Hutchings at the Simons Center during the ``Moduli spaces of pseudoholomorphic curves and their applications to symplectic topology" program. I thank the center for their hospitality.  I also thank the NSF for their support under agreements DMS-1402200 and DMS-1711976.  

While this article was in its final stages, I learned that Opshtein has also recently studied symplectic embeddings of concave toric domains from a slightly different point of view, and found relationships with ball-packings, see \cite[\S 5]{opshtein}.   I thank Opshtein for his helpful correspondence about this.

\section{Weight sequences} 
\label{sec:weightsequences}

In \cite{m1}, McDuff introduced a set of real numbers determined by a $4$-dimensional symplectic ellipsoid, called a {\em weight sequence}.  We begin by explaining how to extend this construction to concave and convex toric domains.

\subsection{The concave case}

Weight sequences in the concave case were defined by Choi, the author, Frenkel, Hutchings, and Ramos in \cite{concave}.  We begin by reviewing this definition.

First, recall that two subsets of $\mathbb{R}^2$ are {\em affine equivalent} if one can be obtained from the other by multiplying by an element of $SL_2(\mathbb{Z})$ and applying a translation.  Now let $\Omega$ be a concave subset of the first quadrant of $\mathbb{R}^2$.  The weight sequence of $\Omega$ is an unordered set of (possibly repeated) nonnegative real numbers $w(\Omega)$ defined inductively as follows.  If $\Omega$ is a triangle with vertices $(0,0), (0,a)$ and $(a,0)$, then the weight sequence of $\Omega$ is $(a)$.  Otherwise, let $a > 0$ be the largest real number such that $\Omega$ contains the triangle with vertices $(0,0), (0,a)$ and $(a,0)$.  Call this triangle $\Omega_1$.  Then the line $x + y = a$ intersects the upper boundary of $\Omega$ in a line segment from $(x_1,a-x_1)$ to $(x_2,a-x_2)$, where $x_1 \le x_2$.  Let $\Omega'_2$ be the closure of the part of $\Omega$ to the left of $x_1$ and above this line, and let $\Omega'_3$ be the closure of the part of $\Omega$ to the right of $x_2$ and above this line, see Figure~\ref{fig:weightdef}.  Then, as explained in \cite[\S 1.3]{concave}, $\Omega'_2$ is affine equivalent to a canonical concave subset of the first quadrant, which we denote by $\Omega_2$.  Similarly, $\Omega'_3$ is affine equivalent to a canonical concave subset which will be denoted by $\Omega_3$.   We now define $w(\Omega)=w(\Omega_1) \cup w(\Omega_2) \cup w(\Omega_3)$, where $\cup$ denotes the (unordered) union with repetitions.  In the inductive definition, note that $w(\Omega)$ is defined to be $\emptyset$ if $\Omega=\emptyset$.  

If $X_{\Omega}$ is a concave toric domain, then we define the weight sequence of $X_{\Omega}$ to be $w(\Omega)$.

\begin{figure}[t]
\centering
\includegraphics[scale=.5]{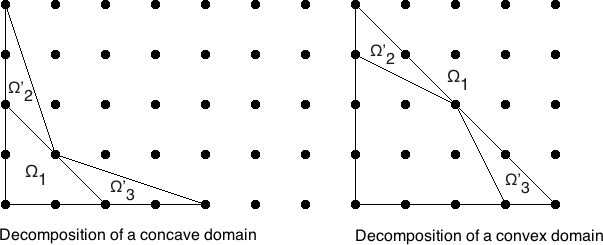}
\caption{The inductive decomposition of convex and concave toric domains}
\label{fig:weightdef}
\end{figure}

\subsection{The convex case}
\label{sec:convexblowup}

We now define a similar weight expansion for any convex toric domain.  The definition of the weight sequence for convex toric domains is similar to the definition of the weight sequence for concave toric domains.  If $\Omega$ is a triangle with vertices $(0,0), (0,b)$ and $(b,0)$ then the weight sequence of $\Omega$ is $(b)$.  Otherwise, let $b > 0$ be the smallest real number such that $\Omega$ is contained in the triangle with vertices $(0,0), (0,b)$ and $(b,0)$.  Call this triangle $\Omega_1$.  The line $x+y = b$ intersects the upper boundary of $\Omega$ in a line segment from $(x_1,b-x_1)$ to $(x_2,b-x_2),$ with $x_1 \le x_2$.  Let $\Omega_2'$ denote the closure of the portion of $\Omega_1 \setminus \Omega$ that is to the left of $x_1$ and below the line $x+y=b$, and let $\Omega_2'$ denote the closure of the portion of $\Omega_1 \setminus \Omega$ that is below $b-x_2$ and below the line $x + y = b$, see Figure~\ref{fig:weightdef}.  

The key point is now that $\Omega_2'$ and $\Omega_3'$ are both affine equivalent to concave subsets, which we denote by $\Omega_2$ and $\Omega_3$ respectively.  The equivalence for $\Omega_2'$ is given by translating down so that the top left corner of $\Omega_2'$ is at the origin, and then multiplying by the matrix $M = \left( \begin{smallmatrix} -1&-1\\ 1& 0 \end{smallmatrix} \right)$, while the equivalence for $\Omega_3'$ is given by translating so that the bottom right corner is at the origin, and then multiplying by the matrix $M'=\left(\begin{smallmatrix} 0 & 1 \\ -1 & -1 \end{smallmatrix}\right)$.  We then define
\[ w(\Omega) = (b; w(\Omega_2) \cup w(\Omega_3)).\]
Thus, the weight sequence for a convex set consists of a number, and then an unordered set of numbers.  We call the first number in this sequence the {\em head}, and we call the other numbers the {\em negative weight sequence}.  If $X_{\Omega}$ is a convex toric domain, then we define the weight sequence of $X_{\Omega}$ to be $w(\Omega)$.  

\subsection{Ball packings}

To simplify the notation, for a convex $\Omega,$ let 
\[ \widehat{B}(\Omega) = \coprod_i B(b_i),\]
where the $b_i$ are the negative weight expansion for $\Omega$.  Similarly, for a concave $\Omega$, let 
\[ B(\Omega)= \coprod_i B(a_i),\] 
where the $a_i$ are the weight expansion for $\Omega$.  Finally, call a concave or convex domain {\em rational} if it has upper boundary that is piecewise linear with rational slopes.  This guarantees that the weight sequence for this domain is finite.  

Here is the key result that we want to prove, in order to prove Theorem~\ref{thm:maintheorem}:

\begin{theorem}
\label{thm:keytheorem}
Let $X_{\Omega_1}$ be a rational concave toric domain, let $X_{\Omega_2}$ be a rational convex toric domain, and let $b$ be the head of the weight expansion for $\Omega_2$.  Then there exists a symplectic embedding
\[ \op{int}(X_{\Omega_1}) \to \op{int}(X_{\Omega_2})\]
if and only if there exists a symplectic embedding 
\[ \op{int}(B(\Omega_1)) \sqcup \op{int}(\widehat{B}( \Omega_2)) \to \op{int}(B(b)).\]
\end{theorem}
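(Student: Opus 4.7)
The plan is to decouple the concave and convex sides, reducing the theorem to two separate ``weight expansion'' lemmas and then combining them. The first lemma, due to Choi, the author, Frenkel, Hutchings and Ramos \cite{concave}, says: for any rational concave toric domain $X_{\Omega_1}$ and any open four-manifold $Y$, there is a symplectic embedding $\op{int}(X_{\Omega_1}) \hookrightarrow Y$ if and only if there is a symplectic embedding $\op{int}(B(\Omega_1)) \hookrightarrow Y$. The second lemma, which is the new content, is the dual statement for convex \emph{targets}: for any rational convex toric domain $X_{\Omega_2}$ with head $b$, and any open four-manifold $Y$, there is a symplectic embedding $\op{int}(Y) \hookrightarrow \op{int}(X_{\Omega_2})$ if and only if there is a symplectic embedding $\op{int}(Y) \sqcup \op{int}(\widehat{B}(\Omega_2)) \hookrightarrow \op{int}(B(b))$. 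Applying the first lemma with target $\op{int}(X_{\Omega_2})$ replaces $X_{\Omega_1}$ on the source side by $B(\Omega_1)$, and then applying the second lemma with $Y = B(\Omega_1)$ replaces $X_{\Omega_2}$ on the target side by the complement $B(b) \setminus \widehat{B}(\Omega_2)$; this is exactly the theorem.

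I would prove the convex-target lemma by induction on the combinatorial complexity of $\Omega_2$, for example the number of edges of its piecewise-linear upper boundary, which is finite by rationality. The base case is $\Omega_2$ equal to the triangle with vertices $(0,0)$, $(0,b)$ and $(b,0)$, where $X_{\Omega_2} = B(b)$, $\widehat{B}(\Omega_2) = \emptyset$, and the statement is tautological. For the inductive step, let $\Omega_1$ denote the smallest right triangle containing $\Omega_2$, so that $\Omega_1 \setminus \Omega_2 = \Omega_2' \cup \Omega_3'$, where $\Omega_2', \Omega_3'$ are affine equivalent via the matrices $M, M'$ of the weight construction to concave toric domains of strictly simpler weight expansion. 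The key geometric input is that the toric ball $X_{\Omega_1} = B(b)$ decomposes, up to a measure-zero set, as the disjoint union of $X_{\Omega_2}$, $X_{\Omega_2'}$ and $X_{\Omega_3'}$. For the forward direction, an embedding $Y \hookrightarrow X_{\Omega_2}$ composes with the inclusion $X_{\Omega_2} \hookrightarrow B(b)$, while $X_{\Omega_2'}, X_{\Omega_3'}$ fill the complementary regions; applying the concave lemma to each of $\Omega_2', \Omega_3'$ then replaces these concave pieces by the balls $B(\Omega_2), B(\Omega_3)$ comprising $\widehat{B}(\Omega_2)$. The backward direction reverses this: given a packing $Y \sqcup \widehat{B}(\Omega_2) \hookrightarrow B(b)$, one uses the concave lemma in reverse to thicken the negative-weight balls into embedded copies of $X_{\Omega_2'}, X_{\Omega_3'}$ inside $B(b)$ disjoint from $Y$, and then removes them to produce a region symplectomorphic to $X_{\Omega_2}$ containing $Y$.

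The main obstacle is this final removal step in the backward direction. It is not automatic that the complement of an arbitrary symplectic embedding of $X_{\Omega_2'} \sqcup X_{\Omega_3'}$ in $B(b)$ is symplectomorphic to $X_{\Omega_2}$, since the embedded pieces may sit in highly nonstandard position. I expect to handle this by working inductively and exploiting the flexibility of symplectic inflation together with McDuff's ball-packing techniques \cite{m1, m4}, which become available once the concave lemma has reduced all pieces to honest balls. This is a direct analog of McDuff's reduction of an ellipsoid-into-ellipsoid embedding problem to a ball packing problem, now adapted to the two-sided toric setting developed in \cite{concave}, with the convex target playing the role McDuff's target ellipsoid did through its own ``negative'' weight expansion.
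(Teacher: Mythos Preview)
Your decoupling strategy has a genuine gap in the first lemma. The result you attribute to \cite{concave}---that for an \emph{arbitrary} open four-manifold $Y$, $\op{int}(X_{\Omega_1}) \hookrightarrow Y$ if and only if $\op{int}(B(\Omega_1)) \hookrightarrow Y$---is not proved there. What \cite{concave} supplies (via the Traynor trick, see \cite[Lem.~1.8]{concave}) is only the easy direction: the weight-balls pack into $X_{\Omega_1}$, so an embedding of $X_{\Omega_1}$ yields one of $B(\Omega_1)$. The reverse direction, reassembling an arbitrary ball packing into an embedded copy of $X_{\Omega_1}$, is precisely the hard content of the theorem and depends crucially on the structure of the target $Y$. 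For $Y$ a ball or ellipsoid this is McDuff's theorem \cite{m1}; for $Y$ a general convex toric domain it is what the present paper establishes. So your first lemma, in the generality you invoke it, is circular.

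The same difficulty reappears in your second lemma: to ``thicken the negative-weight balls into embedded copies of $X_{\Omega_2'}, X_{\Omega_3'}$'' disjoint from $Y$ you need exactly the hard direction of the concave lemma with target $B(b)\setminus Y$, and then to identify the complement with $\op{int}(X_{\Omega_2})$ you need a uniqueness-up-to-isotopy statement for such embeddings. You correctly flag this removal step as the obstacle and gesture at inflation, but at that point the decoupling has bought nothing: the inflation must be carried out on the full configuration anyway. The paper's proof does exactly this, but directly: it builds chains of spheres $\mathcal{C}_{\Omega_1,\delta} \sqcup \widehat{\mathcal{C}}_{\Omega_2,\delta}$ in a single blown-up $\mathbb{C}P^2$ simultaneously encoding both the concave source and the convex target, uses the ball-packing hypothesis via Seiberg--Witten/Taubes to find a class to inflate along (handling non-genericity through the singular inflation of \cite{mo}), and then blows down via a symplectic-neighborhood surgery (Proposition~\ref{prop:surgery}) to produce the embedding. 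The two sides are never separated.
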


Note that the ``only if" direction of Theorem~\ref{thm:keytheorem} follows from the ``Traynor trick" \cite{traynor}, see e.g. \cite[Lem. 1.8]{concave} for the version we need, and the definition of the weight expansion.

\section{Embeddings of toric domains and embeddings of spheres}
\label{sec:ball}

We now begin the proof of Theorem~\ref{thm:keytheorem}.  We already showed the ``only if" direction, so we now show the converse.  In this section, we give the first part of the proof, which involves showing that  to embed a concave toric domain into a convex one, it is equivalent to embed a certain chain of spheres into a blowup of $\mathbb{C}P^2$.

\subsection{Preliminaries}
\label{sec:prelim}

We start by recalling those details of the symplectic blowup construction that are relevant to us.  Let $L$ denote the homology class of the line in $\mathbb{C}P^2$, and let $\omega_0$ denote the Fubini-Study form, normalized so that $\langle \omega_0,L \rangle = 1$.   Now suppose there is a symplectic embedding $\coprod_{i=1}^m B(a_i) \to (\mathbb{C}P^2,\omega_0)$.  We can remove the interiors of the $B(a_i)$ and collapse their boundaries under the Reeb flow to get a symplectic manifold, called the {\em blowup} of the ball packing, which is diffeomorphic to $\mathbb{C}P^2 \# m \overline{\mathbb{C}P^2}$, with a canonical symplectic form $\omega_1$.  The image of $\partial B(a_i)$ in this manifold is called the $i^{th}$ exceptional divisor.  If $E_i$ denotes the homology class of the $i^{th}$ exceptional divisor, then the cohomology class of $\omega_1$ is given by
\[ \op{PD}[\omega_1] = L - \sum_{i=1}^m a_i E_i.\]
Another class which will be relevant for our purposes is the {\em canonical class} $K$ defined by
\[ \op{PD}(K) \eqdef -3L + \sum_{i=1}^m E_i.\]
The class $K$ is $c_1(T^*X)$, as defined by any almost complex structure compatible with $\omega_1$.   
 
\subsection{Blowing up a concave domain}
\label{sec:blowup}

Now let $\Omega$ be any rational concave toric domain, and include $X_{\Omega}$ into some large ball $\op{int}(B(R))$, which we can include into a $(\mathbb{C}P^2,\omega)$.  We now mimic the definition of the weight sequence to define a sequence of symplectic blowups of $(\mathbb{C}P^2,\omega)$ that will produce one of the relevant chains of spheres, see Figure~\ref{fig:blowup} for an illustration.

Let $a$ be the largest real number such that $\Omega$ contains the triangle with vertices $(0,0), (0,a)$ and $(a,0)$, let $\delta>0$ be a sufficiently small real number, and consider the triangle $\Delta(a+\delta)$ with vertices $(0,0), (0,a+\delta)$ and $(a+\delta,0).$  Thus, in Figure~\ref{fig:blowup}, $\Delta(a+\delta)$ is the triangle with legs on both of the axes.  Then there is a symplectic embedding $B(a+\delta) \to B(R)$.  Blow up along $B(a+\delta).$  

Now the upper boundary of $\Delta(a+\delta)$ intersects the complement of $\Omega$ in the plane along a line segment between $(x_1,a+\delta-x_1)$ and $(x_2,a+\delta-x_2)$ with $x_1 < x_2$. Let $\Gamma_1$ be the closure of the subset of $\Omega$ which is to the left of $x_1$ and above the line $x + y = a + \delta$, and let $\Gamma_2$ be the closure of the subset of $\Omega$ which is to the right of $x_2$ and above this line.   Then, as in the definition of the weight sequence, $\Gamma_1$ and $\Gamma_2$ are affine equivalent to concave subsets.  

In the present context, this implies that we can iterate the procedure from the previous paragraph to perform a symplectic blowup for each element of the weight sequence for $\Omega$.   Each blowup produces a symplectic sphere.  The result of this sequence of blowups is a symplectic manifold $(\mathbb{C}P^2 \# m \overline{\mathbb{C}P^2},\omega_1)$ with a configuration of symplectic spheres $\mathcal{C}_{\Omega,\delta_{\Omega}}$, with one sphere for each element of the weight sequence.    Here, $\delta_{\Omega}$ denotes a sequence of small real numbers corresponding to the $\delta$ for each blow up.  

Later, we will to speak of blowing up $X_{\Omega}$ {\em with respect to an embedding} $g: X_{\Omega} \to M$.  This means performing the sequence of blowups from above in $M$, via the embedding $g$, and we will denote the resulting chain of spheres by  $\mathcal{C}_{g(\Omega),\delta_{\Omega}}.$

\begin{figure}[t]
\centering
\includegraphics[scale=.33]{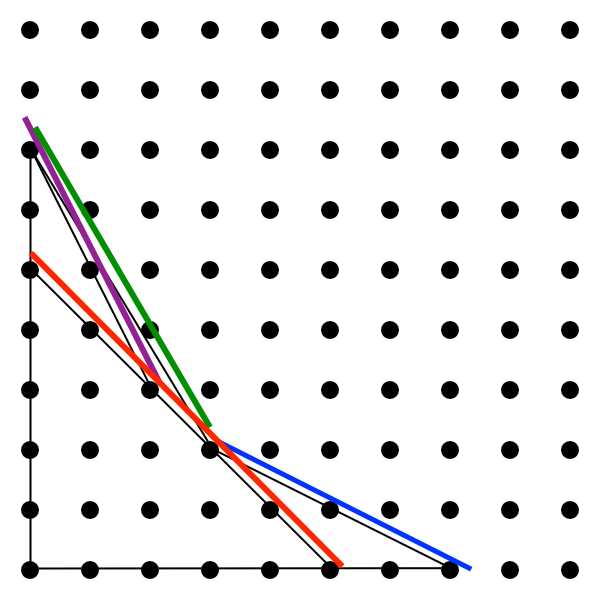}
\caption{Blowing up a rational concave domain $X_\Omega$.  In this case, the upper boundary of $\Omega$ consists of two line segments; to blow up $\Omega$, we perform four blowups, which are illustrated by the four thickened lines.  The first blow up removes the right triangle with legs on both axes, and upper boundary a thickened line.  The next two blow ups correspond to the two regions that touch one of the axes; the order in which we do these two blowups is irrelevant.  We then do one last blow up, corresponding to the triangle with all edges thickened lines.  The canonical weight sequence decomposition of the domain is also shown, in which we have partitioned $\Omega$ into four regions each of which are affine equivalent to right triangles.  The lines demarcating these regions are thin.}       
\label{fig:blowup}
\end{figure}
 
\subsection{Blowing up a convex domain} 
\label{sec:blowup2}

We now define a similar sequence of blowups if $\Omega$ is a rational convex domain.  Specifically, let $b$ be the head of the weight sequence for $\Omega$, and choose a small $\delta > 0$.  The line $x + y = b-\delta$ intersects $\Omega$ in a line segment from $(x_1,b-\delta-x_1)$ to $(x_2,b-\delta-x_2)$, where $x_1 < x_2$. Let $\Delta(b-\delta)$ be the triangle with vertices $(0,0), (b-\delta,0)$ and $(0,b-\delta)$.  Let $\Gamma_1$ be the closure of the region of the complement of $\Omega$ in $\Delta(b-\delta)$ that is to the left of $x_1$, and let $\Gamma_2$ be the closure of the region of the complement that is below $b-\delta-x_2$.  

We showed in the definition of the weight sequence that $\Gamma_1$ and $\Gamma_2$ are affine equivalent to concave toric domains.  We can therefore apply the procedure from \S\ref{sec:blowup} to associate a symplectic blow up of $(\mathbb{C}P^2, (b-\delta)\omega_0)$ to each term in the negative weight sequence for $\Omega$.  This gives a symplectic manifold $(\mathbb{C}P^2 \# n \overline{\mathbb{C}P^2},\omega_2)$ with a configuration of symplectic spheres which we denote by $\widehat{\mathcal{C}}_{\Omega,\delta_{\Omega}}$.  As in the previous section, $\delta_{\Omega}$ denotes a choice of small real numbers corresponding to the $\delta$ in this blow up construction.
  
\subsection{Inner and outer approximations}      

Our blowup procedure is closely related to the {\em inner and outer approximations} from \cite{m1}.  To elaborate, consider first the blow up procedure for rational concave $\Omega$.  Our blowup procedure shows that we can define another concave toric domain, called an {\em outer approximation} to $\Omega$, such that the sequence of blowups removes the interior of the outer approximation and collapses the boundary of the outer approximation to the configuration of spheres $\mathcal{C}_{\Omega,\delta}$. Denote the outer approximation to $\Omega$ by $\Omega^{out}_{\delta}$.   For example, in the situation illustrated in Figure~\ref{fig:blowup}, the outer approximation is the maximum concave set bounded by the axes and segments of the thickened lines. 

Similarly, if $\Omega$ is convex, then our blowup procedure shows that we can define another convex toric domain, called an {\em inner approximation} to $\Omega$, denoted $\Omega^{in}_{\delta},$ such that the sequence of blowups removes the complement of the inner approximation in $B(b-\delta)$ and collapses the boundary of the inner approximation to the configuration of spheres $\widehat{\mathcal{C}}_{\Omega,\delta_{\Omega}}$.  

\subsection{Embedding equivalences} 

The previous subsections defined chains of spheres $\mathcal{C}_{\Omega_1,\delta_{\Omega_1}} \sqcup \widehat{\mathcal{C}}_{\Omega_2,\delta_{\Omega_2}}$.  Define a {\em symplectic embedding of this chain} into a symplectic manifold $X$ to be a map of the disjoint union into $X$ such that the image of the spheres intersect transversally, the map restricts to each individual sphere as a symplectic embedding, and the intersection matrix of the chain in $X$ agrees with the intersection matrix of the chain $\mathcal{C}_{\Omega_1,\delta_{\Omega_1}} \sqcup \widehat{\mathcal{C}}_{\Omega_2,\delta_{\Omega_2}}$.  (Note that our intersection matrix includes the self-intersections of each sphere.)

\begin{proposition}
\label{prop:surgery}
Let $\Omega_1$ be a rational concave toric domain, and let $\Omega_2$ be a rational convex toric domain.  Let $m$ be the length of the weight expansion for $\Omega_1$, and let $n$ be the length of the negative weight expansion for $\Omega_2$.  If there is a symplectic form $\omega$ on $\mathbb{C}P^2 \# (m+n) \overline{\mathbb{C}P^2}$ such that there is a symplectic embedding 
\[\mathcal{C}_{\Omega_1,\delta_{\Omega_1}} \sqcup \widehat{\mathcal{C}}_{\Omega_2,\delta_{\Omega_2}} \to (\mathbb{C}P^2 \# (m+n) \overline{\mathbb{C}P^2},\omega),\]
then there is a symplectic embedding
\[ X_{\Omega_1} \to \op{int}(X_{\Omega_2}).\] 
\end{proposition}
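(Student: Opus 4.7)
The plan is to invert the iterated blowup construction of the two chains by a sequence of symplectic blowdowns.

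First I would blow down the spheres in $\mathcal{C}_{\Omega_1, \delta_{\Omega_1}}$ in the reverse of the order in which they were produced in Section~\ref{sec:spheres}. Reversing the order ensures that at each stage the sphere to be blown down has self-intersection $-1$, so the blowdown is a standard symplectic operation replacing a neighborhood of the sphere by a symplectic ball of the prescribed area. After all $m$ blowdowns one has a symplectic manifold $(M_1, \omega_1)$ diffeomorphic to $\mathbb{C}P^2 \# n \overline{\mathbb{C}P^2}$, with an added-back region symplectomorphic to the outer approximation of $\Omega_1$ described in Section~\ref{sec:spheres}; in particular this region contains a symplectically embedded copy of $X_{\Omega_1}$. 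Since the two chains were disjoint in $N$, the convex chain $\widehat{\mathcal{C}}_{\Omega_2, \delta_{\Omega_2}}$ remains symplectically embedded in $M_1$, disjoint from the added-back region.

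Next I would similarly blow down the spheres in $\widehat{\mathcal{C}}_{\Omega_2, \delta_{\Omega_2}}$ in reverse order, obtaining $(M_2, \omega_2)$ with $M_2 \cong \mathbb{C}P^2$; the added-back region this time is symplectomorphic to the corner region inside $B(b - \delta_{\Omega_2})$ which is the complement of the inner approximation of $\Omega_2$. The embedded copy of the outer approximation of $\Omega_1$ from the first step persists in $M_2$, disjoint from this new region. To conclude, I need to see that this embedded outer approximation of $\Omega_1$ lies in the component of $M_2$ minus the added corner region that is identified with the inner approximation of $\Omega_2$, rather than in the complement of $B(b - \delta_{\Omega_2})$. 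If one can arrange, or reduce by inflation to the case, $[\omega] \cdot L = b - \delta_{\Omega_2}$, then $\omega_2$ has cohomology class $(b - \delta_{\Omega_2})\,L$, so $(M_2, \omega_2)$ is a Fubini--Study $\mathbb{C}P^2$ of four-dimensional volume $\tfrac{1}{2}(b - \delta_{\Omega_2})^2$; the complement of $B(b-\delta_{\Omega_2})$ is then just the line at infinity, of four-dimensional volume zero, which cannot accommodate the outer approximation. Consequently $X_{\Omega_1}$ sits inside the outer approximation of $\Omega_1$, which sits inside the inner approximation of $\Omega_2$, which sits inside $X_{\Omega_2}$, giving the desired embedding into $\op{int}(X_{\Omega_2})$.

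The main obstacle is justifying the blowdowns together with the cohomology class normalization. One must show that the sequence of blowdowns recovers the standard outer and inner approximation pictures rather than some deformation of them, which relies on the uniqueness of symplectic forms on rational surfaces in a fixed cohomology class, together with the fact that the areas of the spheres in the chains are pinned down by the weight sequences. One must also bring the cohomology class of $\omega$ into the form with $[\omega] \cdot L = b - \delta_{\Omega_2}$ while keeping the chains embedded, which will use an inflation argument in the spirit of McDuff and is the technical heart of the matter.
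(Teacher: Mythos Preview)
Your overall strategy---invert the blowup construction to recover the toric pictures---is the same idea as the paper's, but the execution diverges in a way that creates a real gap.

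For the concave chain $\mathcal{C}_{\Omega_1,\delta_{\Omega_1}}$, the paper does not blow down one $(-1)$--sphere at a time. Instead it perturbs the chain so that the spheres meet $\omega$--orthogonally, then invokes a symplectic neighborhood theorem for such configurations (Symington, \cite[Prop.~3.5]{symington}) to identify a neighborhood of the entire chain with a neighborhood in the standard blowup of the outer approximation. One then excises the chain and glues in $X_{\Omega_{1,\delta}^{out}}$ in a single step. Your iterative blowdown can be made to work, but the assertion that ``the added-back region is symplectomorphic to the outer approximation'' is exactly the content of this neighborhood theorem for chains; without it you have a collection of balls whose mutual positions are not controlled.

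The more serious issue is your treatment of the convex chain. You blow $\widehat{\mathcal{C}}_{\Omega_2,\delta_{\Omega_2}}$ all the way down to a $\mathbb{C}P^2$ and then need to locate $X_{\Omega_1}$ inside the inner approximation rather than ``on the wrong side.'' As you correctly note, this forces you to control $[\omega]\cdot L$, and you propose to achieve this by inflation. But the proposition as stated places no hypothesis on $[\omega]$, and in the paper inflation is precisely the machinery used \emph{after} Proposition~\ref{prop:surgery}, in the proof of Theorem~\ref{thm:keytheorem}; building it into the proof of the proposition would be circular, or at best would only establish a weaker statement with an extra cohomological hypothesis.

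The paper sidesteps this entirely: it never blows down $\widehat{\mathcal{C}}_{\Omega_2,\delta_{\Omega_2}}$. After the surgery on the concave chain one has a manifold $\tilde Z$ (diffeomorphic to $\mathbb{C}P^2\#n\overline{\mathbb{C}P^2}$) still containing the convex chain, whose neighborhood is standard by the same neighborhood theorem. Let $Z=\tilde Z\setminus\widehat{\mathcal{C}}_{\Omega_2,\delta_{\Omega_2}}$. Topologically $Z$ is the interior of the inner approximation, a star-shaped domain in $\mathbb{R}^4$, and the symplectic form is standard near the boundary. The paper then invokes \cite[Thm.~9.4.2]{mcduff}: a symplectic form on a star-shaped domain that is standard near the boundary is globally standard. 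This identifies $Z$ with $X_{\op{int}(\Omega_{2,\delta}^{in})}$ directly, with no reference to $[\omega]\cdot L$. Since the embedded $X_{\Omega_1}$ already lies in $Z$, the conclusion follows.

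So the missing idea in your proposal is: rather than blowing down the convex chain and arguing about where things land in $\mathbb{C}P^2$, take the \emph{complement} of the convex chain and apply the uniqueness theorem for symplectic forms on star-shaped domains.
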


To prove the proposition, we will need to use the following result: 

\begin{theorem} (Gromov-McDuff \cite[Thm. 9.4.2]{mcduff})
\label{thm:gromovmcduff}

Let $(M,\omega)$ be a connected symplectic $4$-manifold with no symplectically embedded $2$-spheres of self-intersection $-1$.  Assume that there exists a symplectomorphism 
\[ \Psi: \mathbb{R}^4 \setminus V \to M \setminus K,\]
where $K \subset M$ is compact, and $V \subset \mathbb{R}^4$ is compact and star-shaped with respect to the origin. Then for every open neighborhood $U$ of $K$, $(M,\omega)$ is symplectomorphic to $(\mathbb{R}^4,\omega_{std})$ 
by a symplectomorphism that agrees with $\Psi^{-1}$ on $M \setminus U$.

\end{theorem}

We can now give:

\begin{proof}[Proof of Proposition~\ref{prop:surgery}]
 
By assumption, there is a symplectic embedding
\[\mathcal{C}_{\Omega_1,\delta_{\Omega_1}} \sqcup \widehat{\mathcal{C}}_{\Omega_2,\delta_{\Omega_2}} \to (\mathbb{C}P^2 \# (m+n) \overline{\mathbb{C}P^2},\omega).\] 
We can make a small perturbation to this embedding so that all intersections are symplectically orthogonal, see for example \cite[Lem. 2.2]{m1}.

Now consider the embedding 
\[ \mathcal{C}_{\Omega_1,\delta_{\Omega_1}} \to (\mathbb{C}P^2 \# (m+n) \overline{\mathbb{C}P^2},\omega).\] 
A version of the symplectic neighborhood theorem \cite[Prop. 3.5]{symington} now implies that a neighborhood of these spheres can be identified with a neighborhood of the chain of spheres in the manifold $(\mathbb{C}P^2 \# m \overline{\mathbb{C}P^2},\omega_1)$ that was constructed in \S\ref{sec:blowup} by blowing up the outer approximation.  We can therefore remove the $\mathcal{C}_{\Omega_1,\delta_{\Omega_1}}$ and glue in a copy of $X_{\Omega^{out}_{1,\delta_1}}$ to get a new symplectic manifold $\tilde{Z}$ which admits a symplectic embedding of $X_{\Omega_1}$ whose image avoids a small neighborhood of $\widehat{\mathcal{C}}_{\Omega_2,\delta_{\Omega_2}}$.  By again applying the symplectic neighborhood theorem \cite[Prop. 3.5]{symington}, this neighborhood can be identified with a neighborhood of the chain of spheres constructed in \S\ref{sec:blowup2}  by blowing up the inner approximation.

Let $Z$ denote the complement of $\widehat{\mathcal{C}}_{\Omega_2,\delta_{\Omega_2}}$ in $\tilde{Z}$.  As above, we can glue in a copy of $\mathbb{R}^4 \setminus X_{\op{int}(\Omega^{in}_{2,\delta_{\Omega_2}})}$ to $Z$.  This gives a symplectic manifold $(M,\omega)$.  Since $H_2(M)=0$, $M$ can not contain any symplectically embedded $-1$ spheres.  For $r<1$ close to $1$, we can choose a symplectomorphism $\Psi: M \setminus K \to \mathbb{R}^4 \setminus X_{r \cdot \Omega^{in}_{2,\delta_{\Omega_2}}}$ for some compact $K \subset M$.  The set $X_{r \cdot \Omega^{in}_{2,\delta_{\Omega_2}}}$ is star-shaped with respect to the origin, since  if $v \in X_{r \cdot \Omega^{in}_{2,\delta_{\Omega_2}}}$ then so is $tv$ for all $0 \le t \le 1$ as $r \cdot \Omega^{in}_{2,\delta_{\Omega_2}}$ is itself star-shaped with respect to the origin.   Now regard $Z$ as a neighborhood of $K$ in $M$, and apply Theorem~\ref{thm:gromovmcduff} to $M$ with $U = Z$.  This produces a symplectomorphism between $(M,\omega)$ and $(\mathbb{R}^4,\omega_{std})$ that maps $Z$ to $X_{\op{int}(\Omega^{in}_{2,\delta_{\Omega_2}})}$.
 
\end{proof}

\section{Applying inflation}

In this section, we prove Theorem~\ref{thm:keytheorem}. 

Let $\Omega_1$ be concave and $\Omega_2$ convex.  We already proved the ``only if" direction of the theorem, so we just have to produce an embedding $\op{int}(X_{\Omega_1}) \to \op{int}(X_{\Omega_2})$, assuming that a certain ball packing exists.    By Proposition~\ref{prop:surgery}, it suffices to find a symplectic embedding 
\[\mathcal{C}_{\Omega_1,\delta_{\Omega_1}} \sqcup \widehat{\mathcal{C}}_{\Omega_2,\delta_{\Omega_2}} \to (\mathbb{C}P^2 \# (m+n) \overline{\mathbb{C}P^2},\omega),\]
where $\omega$ is any symplectic form.  If we choose $r$ sufficiently small, then we can construct a symplectic embedding  
\[\mathcal{C}_{r \cdot \Omega_1,\delta_{\Omega_1}} \sqcup \widehat{\mathcal{C}}_{\Omega_2,\delta_{\Omega_2}} \to (\mathbb{C}P^2 \# (m+n) \overline{\mathbb{C}P^2},\omega)\]
for some $\omega$, by the procedure in \S\ref{sec:blowup} and \S\ref{sec:blowup2}.  We now want to change the areas of the spheres in $\mathcal{C}_{r \cdot \Omega_1,\delta_{\Omega_1}}$, keeping them symplectic.  We accomplish this by using the ``inflation" method, from (for example) \cite{m1,mo,bi}.

\subsection{Review of inflation}
\label{sec:inflation}

We begin by reviewing the inflation method.

We first need to recall the aspects of Taubes' ``Seiberg-Witten = Gromov" theorem that we will need.  Let $(X,\omega)$ be a closed symplectic $4$-manifold,  and let $b_2^+(X)$ denote the dimension of the maximal subspace $H_2^+(X,\mathbb{R})$ of $H^2(X,\mathbb{R})$ on which the intersection form is positive definite.  If $A \in H_2(X;\mathbb{Z})$ and $b_2^+(X) \ge 1,$ then {\em Taubes' Gromov invariant}   $Gr(A)$ is defined by counting certain mostly embedded $J$-holomorphic curves in class $A$, for generic $\omega$-compatible almost complex structure $J$, see for example  \cite[\S 2]{hutchingslecture} for details. A symplectic form $\omega$ defines a $spin^c$ structure $\mathbf{s}_{\omega}$, and the ``Seiberg-Witten = Gromov" theorem \cite{taubesgr} states that there is an equivalence 
\begin{equation}
\label{eqn:swgr}
Gr(A) = SW(A),
\end{equation}
where $SW(A)$ denotes the {\em Seiberg-Witten invariant} of the $spin^c$ structure $\mathbf{s}_{\omega} + \op{PD}(A),$ see \cite{kmbook}.   When $b_2^+(X) = 1$, which is the situation in the present work, the Seiberg-Witten invariant also depends on a choice of ``chamber", which we can identify with a choice of orientation of the line $H_2^+(X;\mathbb{R})$; in this case, in \eqref{eqn:swgr} we choose the chamber determined by the cohomology class of the symplectic form.   

Now recall that a {\em symplectic divisor} is a union of symplectically embedded surfaces which intersect transversally and symplectically orthogonally, while an {\em exceptional class}  $\mathcal{E}  \in H_2(X)$ is a class $\mathcal{E}$ represented by a symplectically embedded $-1$ sphere. Here is the main result from the theory of inflation that we use:

\begin{proposition}\cite[Lem. 1.2.11]{mo}
\label{thm:moinflation}
Let $(X,\omega)$ be a symplectic manifold, $A \in H_2(X)$, and $S$ a symplectic divisor.  Assume: 
\begin{enumerate}[(i)]
\item $A \cdot A > 0,$ \label{thiswork}
\item $A \cdot \mathcal{E} \ge 0$ for all exceptional classes $\mathcal{E}$, \label{thiswork2}
\item $A \cdot S_i \ge 0$ for every component $S_i$ of $S$, \label{thiswork3} 
\item $X$ has nonzero Gromov invariant in class $A$.
\end{enumerate}
Then for any $s \ge 0$, the class
\[  [\omega] + s\op{PD}(A)\]
has a symplectic representative that is nondegenerate on $S$.
\end{proposition}

The idea of the proof of Proposition~\ref{thm:moinflation} is that since the manifold $X$ has nontrivial Gromov invariant in class $A$, we can find a symplectic submanifold $T$ in class $A$.  We use condition (\ref{thiswork2}) to guarantee that $T$ is connected, and then (\ref{thiswork}) to guarantee that $T$ has positive self intersection.  We then deform $\omega$ locally around $T$ in the normal direction by adding a certain closed $2$-form that is $0$ along $T$, see also Remark~\ref{rmk:illustration} below.  The condition (\ref{thiswork3}) is needed to guarantee that these deformations of $\omega$ remain symplectic along $S$.   A significant complication occurs when $S$ has at least one component that can not be made $J$-holomorphic and transverse for any $J$ because of index considerations, because one can not simultaneously guarantee that $S$ is $J$-holomorphic while $J$ is also suitably generic for defining Taubes' Gromov invariant; however, these difficulties can be overcome.  For more details, see \cite{mo}.

\begin{remark}
\label{rmk:illustration}

The simpler case where $A \cdot A = 0,$ $\mathcal{S}$ is empty, and $A$ has an embedded connected symplectic representative $T$ is illustrative.  Since $A \cdot A = 0$, the normal bundle of $C$ is trivial, and a neighborhood of $C$ can be identified symplectically with the symplectic product
\begin{equation}
\label{eqn:localdecomp}
(C \times D^2, \omega|_C \times \omega_{std}),
\end{equation}
where $D^2$ is a small disc.  To find a deformation through symplectic forms as in Proposition~\ref{thm:moinflation}, we locally add $(0, g(r) \omega_{std})$ to $\omega$ in the neighborhood given by \eqref{eqn:localdecomp}, where $g(r)$ is a nonnegative 
bump function.

In general, we can identify a neighborhood of $T$ with a neighborhood of the zero section in a complex Hermitian line bundle $\pi: E \to C$ of degree $A \cdot A$, so that the symplectic form is given by 
\[ \pi^*(\omega|_C) + d(\pi r^2 \beta),\]
where $r$ is the radial distance function, and $\beta$ is a certain connection $1$-form on the unit circle bundle.   We can now add 
\[ -d( g(r) \beta) \]
to the symplectic form, where $g(r)$ is an appropriately chosen bump function, see eg \cite[Lem. 1.1]{m3} for the details.  The requirement $A \cdot A \ge 0$ is required to ensure that the form remains symplectic for large $s$; in Proposition~\ref{thm:moinflation}, we demand in addition that $A \cdot A > 0$ to avoid potential complications coming from multiply covered torii in Taubes' Gromov invariant, although this assumption could be weakened.
\end{remark}

To apply Proposition~\ref{thm:moinflation}, we need conditions guaranteeing that certain classes have nonzero Gromov invariant.  By \eqref{eqn:swgr}, it is equivalent to find spin$^c$ structures with nonvanishing Seiberg-Witten invariant.  The Seiberg-Witten invariants of blow ups of $\mathbb{C}P^2$ were studied by Kronheimer and Mrowka in \cite{km}.  Their results, combined with \eqref{eqn:swgr} and known properties of the Seiberg-Witten invariant give the following, see Remark~\ref{rmk:howdoesthiswork} below:

\begin{proposition} 
\label{prop:nonzero}
Let $(M,\omega)$ be a symplectic blow up of $\mathbb{C}P^2$, and let $A \in H_2(M; \mathbb{Z}).$  Assume that
\begin{equation}
\label{eqn:coolequations}
A^2 - K \cdot A \ge 0, \quad \quad [\omega] \cdot ( \op{PD}(K) - A ) < 0.
\end{equation}
Then $Gr(A) \ne 0$.
\end{proposition}

\begin{remark}
\label{rmk:howdoesthiswork}

A sketch of the proof of Proposition~\ref{prop:nonzero} is valuable in order to understand why the conditions \eqref{eqn:coolequations} appear.  The equation
\begin{equation}
\label{eqn:negativeareaequation}
 [\omega] \cdot (\op{PD}(K) - A) < 0
\end{equation}
guarantees that the manifold $M$ has vanishing Gromov invariant in the class $\op{PD}(K) - A$, since \eqref{eqn:negativeareaequation} implies that any curve in class $\op{PD}(K) - A$ would have to have negative area.  Now the Seiberg-Witten invariants satisfy a basic symmetry, called {\em charge conjugation}.  In the case where $b_2^+ \ge 2,$ charge conjugation states that the Seiberg-Witten invariants of a spin$^c$ structure and its dual structure are the same up to sign, which one expects from examining the unperturbed equations and applying complex conjugation.  In the situation of Proposition~\ref{prop:nonzero}, where $b_2^+ = 1$, a similar fact holds except that there is an additional complication coming from the choice of chamber; the upshot for our purposes is that when we combine the charge conjugation relation in the case $b_2^+ = 1$ with \eqref{eqn:swgr}, we find that
\begin{equation}
\label{eqn:conjugationrelation}
Gr(A) - Gr(\op{PD}(K - A)) =  w(A) \quad \quad \op{mod} \hspace{1 mm} 2,
\end{equation}
where $w(A)$ is the {\em wall-crossing number} which is defined as the difference between the two chambers of the Seiberg-Witten invariants of the spin$^c$ structure corresponding to $A$, counted modulo\footnote{There is a version of \eqref{eqn:conjugationrelation} that holds without reducing modulo $2$ but we do not need this.}  $2$. This wall-crossing number was computed in the cases we need by Kronheimer-Mrowka: as explained in \cite[Thm. 9.9]{salamon}, the condition
\[ A^2 - K \cdot A \ge 0 \]
implies that 
\begin{equation}
\label{eqn:wallcrossing}
w(A)=1.
\end{equation} 
Since $Gr(\op{PD}(K) - A) = 0$ as explained above, combining \eqref{eqn:conjugationrelation} with \eqref{eqn:wallcrossing} implies that $Gr(A) \ne 0$. 

\end{remark}

We will also need a ``family" version of Proposition~\ref{thm:moinflation}.  To state the variant that we use, recall that two symplectic forms are called {\em deformation equivalent} relative to a symplectic divisor $S$ if there is a family of symplectic forms between them that restrict to nondegenerate forms on $S$; they are called {\em isotopic} relative to $S$ if one can choose this family to have constant cohomology class.  We call such a family a {\em connecting isotopy}.

\begin{theorem} \cite[Thm. 1.2.12]{mo}
\label{thm:familyinflation}
Let $(M,\omega)$ be a symplectic blow up of $\mathbb{C}P^2$ and let $\omega'$ be any symplectic form in the same cohomology class as $\omega$.  Assume that $\omega$ and $\omega'$ are deformation equivalent relative to $S$.  Then $\omega$ and $\omega'$ are isotopic relative to $S$.  Moreover, if $\omega = \omega'$ near $S$ then we can choose the connecting isotopy to be constant near $S$.
\end{theorem}

The assumption in Theorem~\ref{thm:familyinflation} that $(M,\omega)$ is a blowup of $\mathbb{C}P^2$ is sufficient for our purposes, but can be weakened; probably all that is needed is that $b_2^+(X) = 1$ so that $X$ has enough nonvanishing Seiberg-Witten invariants, see \cite[Rmk. 1.2.14]{mo}.

\subsection{Connectivity}

Having reviewed the inflation method, we can now give the proof of Proposition~\ref{prop:connectivity}, which states that the space of embeddings from a concave domain into a convex one is connected.  We also prove Corollary~\ref{cor:nonrational}.

\begin{proof}[Proof of Proposition~\ref{prop:connectivity}]

The proof closely follows the proof of \cite[Cor. 1.6]{m1}.  

First, assume that $\Omega_1$ and $\Omega_2$ are rational, and let $g_0$ and $g_1$ be symplectic embeddings of $X_{\Omega_1}$ into $\op{int}(X_{\Omega_2})$.  By applying Alexander's trick, see e.g. the proof of \cite[Prop. A.1]{schlenk}, we can assume that $g_0$ and $g_1$ agree with the inclusion of $X_{r \Omega_1}$ into $\op{int}(X_{\Omega_2})$ for sufficiently small $r$.  

We will produce an isotopy between $g_0$ and $g_1$ by using Theorem~\ref{thm:familyinflation}.  Namely, as explained in \S\ref{sec:blowup}, we can blow up $X_{\Omega_1}$ with respect to $g_0$ to get a symplectic manifold $(X_0,\omega_0)$ with a symplectic divisor $S = \mathcal{C}_{g_0(\Omega_1), \delta_{ \Omega_1}} \sqcup \mathcal{\widehat{C}}_{\Omega_2,\delta_{\Omega_2}}.$  We can produce a family of symplectic forms on $X_0$ starting at $\omega_0$ by first blowing up $t \cdot X_{\Omega_1}$ with respect to $g_0$ as $t$ ranges from $1$ to $r$, and then blowing up $t \cdot X_{\Omega_1}$ with respect to $g_1$ as $t$ ranges from $r$ to $1$, while identifying the underlying smooth manifolds of these blow ups with $X_0$ as in Step $2$ of \cite[\S 3]{m3}.  This implies in particular that the symplectic form $\omega'$ on $X_0$, given by blowing up along $g_1$, is deformation equivalent to the symplectic form $\omega=\omega_0$.  We can assume in addition that $\omega = \omega'$ near $S$.  

Now apply Theorem~\ref{thm:familyinflation}.  This gives an isotopy of symplectic forms on $X_0$ that is constant near $S$.  By Moser's trick, this gives an isotopy $\hat{\Psi}_t$ of the symplectic manifold $X_0$, which we can blow down to get an isotopy $\Psi_t$ of $\Omega_2$ taking $g_0$ to $g_1$.

{\em Step 2.}  We now deduce the general case from the rational one.  

We can extend the embeddings $g_0$ and $g_1$ to an open neighborhood of $\Omega_1,$ and so we can find a rational concave set $\Omega'_1$ satisfying
\[ \Omega_1 \subset \Omega'_1 \]
such that $g_0$ and $g_1$ give symplectic embeddings
\[ X_{\Omega'_1} \to \op{int}(X_{\Omega_2}).\]
We can then pick a rational convex set $\Omega'_2$ with
\[ \Omega'_2 \subset \Omega_2\]
so that the images of $X_{\Omega'_1}$ under $g_0$ and $g_1$ lie in $\op{int}(X_{\Omega'_2}).$  By the previous step, we can find an isotopy of $\op{int}(X_{\Omega'_2})$ taking $g_0$ to $g_1$.  Moreover, since the isotopy of symplectic forms in the previous step was constant near $S$, we can extend this isotopy to $\op{int}(X_{\Omega_2}).$

\end{proof}

\begin{proof}[Proof of Corollary~\ref{cor:nonrational}]

Since for $\lambda < 1,$ $X_{\lambda \Omega_1} \subset X_{\Omega_1}$, an embedding
\[ \op{int}(X_{\Omega_1}) \to \op{int}(X_{\Omega_2})\]
induces an embedding
\[ \op{int}(X_{\lambda \Omega_1}) \to \op{int}(X_{\Omega_2})\]
by composition.

In the other direction, given symplectic embeddings
\[X_{\lambda \Omega_1} \to \op{int}(X_{\Omega_2})\]
for all $\lambda <1$, we can choose a sequence of embeddings
\[g_n:X_{(1-1/n) \Omega_1} \to \op{int}(X_{\Omega_2}).\]
By applying Proposition~\ref{prop:connectivity}, we can further assume that this sequence of maps is nested.  We can therefore construct the desired symplectic embedding by taking the direct limit.

\end{proof}

\subsection{Inflating the spheres}

We can now complete the proof of Theorem~\ref{thm:keytheorem} by using the inflation procedure.

\begin{proof}[Proof of Theorem~\ref{thm:keytheorem}]

Let $\Omega_1$ be concave and $\Omega_2$ convex.  We already showed the ``only if" direction of the theorem, so we just have to prove the converse.

{\em Step 1.}  By assumption, there is a symplectic embedding
\begin{equation}
\label{eqn:givenequation}
\op{int}(B(\Omega_1)) \sqcup \op{int}(\hat{B}(\Omega_2)) \to \op{int}(B(b))).
\end{equation}
Let the $a_i$ be the weights for $\Omega_1$ and the $b_i$ the negative weights for $\Omega_2$, as defined in \S\ref{sec:weightsequences}.    

Because of the existence of the embedding \eqref{eqn:givenequation}, we can find a symplectic embedding 
\begin{equation}
\label{eqn:ballpacking}
(\coprod_i B(a_i')) \sqcup (\coprod_j B(b_j')) \to \op{int}(B(b)),
\end{equation}   
where the $a_i'$ and the $b_i'$ are strictly smaller than the corresponding $a_i$ and $b_i$, but otherwise as close as we wish.  For any $\lambda > 1$, we can in addition choose the $a_i', b_i'$ so that $(b; b_1', \ldots, b_n')$ is the weight sequence for a rational convex toric domain $\Omega'_2$ with the property that 
\begin{equation}
\label{eqn:firstlambdaequation}
\Omega'_2 \subset \lambda \cdot \Omega_2,
\end{equation}
while the $a_i'$ are the weights for a rational concave toric domain $\Omega'_1$ with
\begin{equation}
\label{eqn:secondlambdaequation}
\frac{1}{\lambda} \cdot \Omega_1 \subset \op{int}(\Omega'_1).
\end{equation} 
We can also assume that $b$, the $a_i'$ and the $b_i'$ are all rational.  

We will show that because of the existence of an embedding \eqref{eqn:ballpacking}, there is an embedding
\begin{equation}
\label{eqn:neededembedding}
\op{int}(X_{\Omega'_1}) \to X_{\Omega'_2}.
\end{equation}

{\em Step 2.}  Let $r$ be small enough that $r \cdot \Omega_1' \subset \op{int}(\Omega_2').$   Since $r \cdot  \Omega_1'$ is a concave toric domain, and $\Omega_2'$ is a convex toric domain,  we can apply the iterated blowup procedure from \S\ref{sec:blowup} and \S\ref{sec:blowup2} to conclude that there is a symplectic embedding
\[S = \mathcal{C}_{r \cdot \Omega_1', \delta_{r \cdot \Omega_1'}} \sqcup \mathcal{\widehat{C}}_{\Omega'_2,\delta_{\Omega'_2}} \to (\mathbb{C}P^2 \# (m+n) \overline{\mathbb{C}P^2},\omega_1).\]  

Let $L$ denote the homology class of the line in this blowup, let $E_1,\ldots,E_m$ be the exceptional classes associated to the blow ups for $r \cdot \Omega'_1$, and let $\widehat{E}_1,\ldots,\widehat{E}_n$ be the exceptional classes associated to the blow ups for $\Omega'_2$.  Let $\ell = \op{PD}(L)$, let $e_i=\op{PD}(E_i)$, and let $\widehat{e}_j=\op{PD}(\widehat{E}_j)$.  By \S\ref{sec:prelim} we know that the cohomology class of $\omega_1$ is given by
\[ [\omega_1] = b \ell - \sum_{i=1}^m (r \cdot a'_i) e_i - \sum_{j=1}^n b_j' \hat{e}_j - err(\delta),\]
where $\op{err}(\delta)$ denotes the error term coming from the $\delta_i$ parameters in the iterated blowup construction, and limits to $0$ as the $\delta_i$ do.

To show that a symplectic embedding \eqref{eqn:neededembedding} exists, we will show that there is a symplectic embedding 
\[\mathcal{C}_{\Omega'_1, \delta_{\Omega'_1}}\sqcup \mathcal{C}_{\Omega'_2,\delta_{\Omega'_2}} \to (\mathbb{C}P^2 \# (m+n) \overline{\mathbb{C}P^2},\omega),\]
for some symplectic form $\omega$, so that we can appeal to Proposition~\ref{prop:surgery}.  The intersection matrix for the configuration $\mathcal{C}_{\Omega'_1, \delta_{\Omega'_1}}\sqcup \mathcal{C}_{\Omega'_2,\delta_{\Omega'_2}}$ is the same as the intersection matrix for $S$.  Our strategy is then to find a symplectic form $\omega_2$, different from $\omega_1$, that restricts to $S$ as a nondegenerate form with the property that the spheres in $S$ have the same areas as the spheres in $\mathcal{C}_{\Omega'_1, \delta_{\Omega'_1}}\sqcup \mathcal{C}_{\Omega'_2,\delta_{\Omega'_2}}$.    

{\em Step 3.}  Consider the rational homology class 
\[ A \eqdef bL - \sum_{i=1}^m a_i' E_i - \sum_{j=1}^n b_j' \hat{E}_j,\]
and choose a positive integer $k$ such that $kA$ is integral.  We want to apply Proposition~\ref{thm:moinflation} to $kA$ on the manifold $M=(\mathbb{C}P^2 \# (m+n) \overline{\mathbb{C}P^2},\omega_1)$, for sufficiently large integer $k$.  To do this, we have to check that the four conditions in the assumptions of Proposition~\ref{thm:moinflation} are satisfied.

The conditions in \eqref{eqn:coolequations} are satisfied for sufficiently large $k$, so by Proposition~\ref{prop:nonzero} we can assume that the manifold $M$ has nonzero Gromov invariant in class $kA$.  The condition $k^2 (A \cdot A) > 0$ holds because of the existence of the embedding \eqref{eqn:ballpacking}, since symplectic embeddings have to preserve volume.  

To see that the second condition in Proposition~\ref{prop:nonzero} holds, let $\mathcal{E}$ be an exceptional class.  Then by a result of Li-Li \cite{lili}, $E$ is an exceptional class for any symplectic form on $\mathbb{C}P^2 \# (m+n) \overline{\mathbb{C}P^2}$.  
In particular, the symplectic form $\omega$ that comes from blowing up along the ball packing \eqref{eqn:ballpacking} has positive pairing with $\mathcal{E}$, which implies that $k \cdot A$ does as well.    

Finally, we can see that $k A $ intersects any sphere in $\mathcal{S}$ nonnegatively as follows.  First, let $S_i$ be an element of $\mathcal{\widehat{C}}_{\Omega'_2,\delta_{\Omega'_2}}.$  Then the homology class $[S_i]$ of $S_i$ is in the span of $L, \hat{E}_1, \ldots, \hat{E}_n$, so
\[ A \cdot [S_i] = (bL - \sum_{j=1}^n b'_j \hat{E}_j) \cdot [S_i].\]  
Since $[\omega_1] \cdot S_i > 0$, we have
\begin{equation}
\label{eqn:positivityequation}
(bL- \sum^n_{j=1} b'_j \hat{E}_j) \cdot [S_i] - err(\delta) \cdot [S_i] > 0.
\end{equation}
We can choose the $\delta_i$ in the blow up construction as small as we would like, and the relation \eqref{eqn:positivityequation} remains true.  Hence, since $err(\delta)$ goes to $0$ as the $\delta_i$ do, we have $A \cdot [S_i] \ge 0$ by continuity.

The case where $S_i$ is an element of $\mathcal{C}_{\Omega'_1, \delta_{\Omega'_1}}$ is analogous.  In this case, $[S_i]$ is in the span of $E_1, \ldots, E_m.$  Hence
\[ A \cdot [S_i] = (- \sum_{i=1}^m a'_i E_i) \cdot [S_i].\]   Now choose $R$ such that $\Omega'_1 \subset R \cdot \op{int}(\Omega'_2)$, and blow up to get a symplectic form  on $\mathbb{C}P^2 \# (m+n) \overline{\mathbb{C}P^2}.$   This form pairs positively with $[S_i]$, so we can repeat the argument from the previous paragraph to conclude that $A \cdot [S_i] \ge 0$ in this case as well.

{\em Step 4.}   By the previous step, we are now justified in applying Proposition~\ref{thm:moinflation}.  For any $s \ge 0$, this gives a symplectic form $\omega_{1,s}$ in cohomology class
\[ [\omega_{1,s}] = [\omega_1] + s k \cdot \op{PD}[A]\]
that is nondegenerate along $S$.  Now consider the symplectic form $\frac{1}{1+sk} \omega_{1,s}.$  We have 
\[ \frac{1}{1+sk} [\omega_{1,s}] = b \ell - \sum_{j=1}^n b'_j \hat{e}_j - \sum_{i=1}^m a'_i  \left(\frac{r+sk}{1+sk}\right) e_i - \frac{1}{1+sk} err(\delta). \]
By choosing $k$ sufficiently large, this gives an embedding
\[\mathcal{C}_{r \cdot \Omega'_1, \tilde{\delta}_{r \cdot \Omega'_1}}\sqcup \mathcal{C}_{\Omega'_2,\delta_{\Omega'_2}} \to (\mathbb{C}P^2 \# (m+n) \overline{\mathbb{C}P^2},\omega)\]
for $r < 1$ arbitrarily close to $1$, and appropriate choice of $\tilde{\delta}_{r \cdot \Omega'_1}.$  Hence, by 
Proposition~\ref{prop:surgery}, there is a symplectic embedding
\[X_{r \cdot \Omega'_1} \to X_{\Omega_2'},\]
hence by Corollary~\ref{cor:nonrational} a symplectic embedding
\begin{equation}
\label{eqn:neededresult}
\op{int}(X_{\Omega'_1}) \to X_{\Omega'_2}.
\end{equation}

{\em Step 5.}  By combining \eqref{eqn:firstlambdaequation}, \eqref{eqn:secondlambdaequation}, and \eqref{eqn:neededresult}, we therefore have an embedding
\[ \frac{1}{\lambda^2} X_{\Omega_1} \to X_{\Omega_2},\]
for any $\lambda > 1$.  Hence, by corollary~\ref{cor:nonrational}, there exists an embedding
\[ \op{int}(X_{\Omega_1}) \to X_{\Omega_2},\]
which must necessarily have image in the interior of $X_{\Omega_2}$.  This completes the proof of the theorem.

\end{proof}

\begin{remark}
It is a very interesting problem to try to understand embeddings of other kinds of toric domains.  For example, one can ask under what conditions on $(a,b,c)$  there exists a symplectic embedding 
\begin{equation}
\label{eqn:polydiscball}
P(a,b) \to B^4(c).
\end{equation}  
It does not seem possible to answer this question using only the methods in this paper.  For studying embeddings as in \eqref{eqn:polydiscball} in a systematic way using something like the method we develop here, a first step would be to find an analogue of Proposition~\ref{prop:surgery} for embeddings with domain a polydisc, such that there exists a natural symplectic embedding of the resulting chain of spheres up to differences in symplectic areas.  The method in \S\ref{sec:blowup} can not be used to do this.  

It would be valuable to explore whether any scheme at all like what is done in this paper could be used to study \eqref{eqn:polydiscball}, or to study similar problems.  Certainly new ideas would be needed for this.  It is important to warn though that there are definitely differences between embeddings of concave toric domains into convex ones, and embedding problems like \eqref{eqn:polydiscball}.  For one thing, as has been already pointed out, ECH capacities do not always give a sharp obstruction to \eqref{eqn:polydiscball}.  Also, by Proposition~\ref{prop:connectivity}, symplectic embeddings of concave toric domains into convex ones are unique up to isotopy when they exist, which is known not to be true for certain problems like \eqref{eqn:polydiscball}.  There are natural symplectic packings of polydiscs by balls, and one could hope for some generalizations of the weight sequence and Theorem~\ref{thm:maintheorem} along these lines.  However, this looks problematic as well.  For example, the polydisc $P(1,2)$ has a natural decomposition into four disjoint $B^4(1)$, but it is not true that an embedding of $\sqcup_{i=1}^4 B^4(1)$ implies the existence of an embedding $P(1,2)$ since the former domain embeds into $B^4(2)$ but as stated in the introduction the domain $P(1,2)$ does not.

\end{remark}

\subsection{Examples}
\label{sec:examples} 

We now present several illustrative examples.  

\begin{example} {\em Weight sequences are not unique.}
Let $\Omega$ be the rectangle with vertices $(0,0), (1,0), (0,1)$ and $(1,1)$, and let $\Omega'$ be the triangle with vertices $(0,0), (2,0)$ and $(0,1)$.  Then $X_\Omega$ is a polydisk and $X_\Omega'$ is an ellipsoid.  Both $\Omega$ and $\Omega'$ are convex (we could also regard $\Omega'$ as concave, although for this example we do not want to), and the weight sequence for both is given by $(2,1,1);$ in particular, both have the same weight sequence.  This shows that weight sequences are not unique.  Also, by Theorem~\ref{thm:keytheorem}, a concave domain embeds into $X_{\Omega}$ if and only if it embeds into $X_{\Omega'}$.  This generalizes a result of Frenkel and Mueller \cite[Cor. 1.5]{fm}, which proves this when the domain is an ellipsoid (our proof is also different from theirs).      
\end{example}

\begin{example} {\em Constraints on weight sequences?}  
Let $(a_0,\ldots,a_n)$ be any finite sequence of nonincreasing real numbers.  We now explain why we can always construct a concave toric domain with weight sequence $(a_0,\ldots,a_n)$.  This concave domain will have the property that at each step in the inductive definition of the weight sequence, the domain $\Omega_2'$ from \S\ref{sec:weightsequences} is empty (we will call such a domain {\em short}).  By induction, we can assume that we can construct a short rational concave domain $\Omega_0$ with weight sequence $(a_1,\ldots,a_n).$  Now, consider the triangle $\Delta(a_0)$ with vertices $(0,0), (a_0,0)$ and $(0,a_0)$.  Multiply $\Omega_0$ by the matrix $\left( \begin{smallmatrix} 1&-1\\ 0& 1 \end{smallmatrix} \right)$ and then translate the result by $(a_0,0)$.  Let $\Omega$ be formed by taking the union of this region with $\Delta(a_0)$.  Then by construction $\Omega$ is a short concave domain with weight sequence $(a_0,\ldots,a_n)$.  Thus, any possible ball packing problem of a ball can arise by applying Theorem~\ref{thm:keytheorem}.  This is to be compared with the case of embedding an ellipsoid into a ball.  For example, it is shown in \cite[Lem. 1.2.6]{ms} that if $a=p/q$ is rational, then the weights $(a_1,\ldots,a_m)$ of $E(1,a)$ are required to satisfy
\[ \sum_{i=1}^m a_i^2 = a, \quad \quad \sum_{i=1}^m a_i  = a + 1 - \frac{1}{q}.\]
\end{example}

\begin{example}  {\em Billiards.}   Another simple example of a symplectic four-manifold is the {\em Lagrangian bidisk}
\[ P_L  \eqdef \lbrace (p_1,q_1,p_2,q_2) \in \mathbb{R}^4 \hspace{2 mm} | \hspace{2 mm} p_1^2 + p_2^2 \le 1, q_1^2 + q_2^2 \le 1 \rbrace.\]
The domain $P_L$ is the state space for a circular billiard table, and is of interest in dynamics.  After the first version of this paper appeared, Ramos \cite{ramos} showed that the interior of $P_L$ is in fact symplectomorphic to the interior of a concave toric domain.  Thus, Theorem~\ref{thm:maintheorem} can be used to produce embeddings of $\op{int}(P_L)$ into many targets.  Ramos used Theorem~\ref{thm:maintheorem} to produce optimal embeddings of $\op{int}(P_L)$ into balls and ellipsoids, for example he showed that there is a symplectic embedding
\[ P_L \to B^4(3 \sqrt{3})\]
and no embedding into a smaller ball exists, answering a question of Ostrover.  

Ramos' argument involves producing a toric action on subsets of $P_L$ and examining the moment image.  As mentioned in Example~\ref{ex:mainexample}, convex toric domains naturally arise from toric actions on closed symplectic manifolds, and it would be interesting to look for situations as in the case of $P_L$ where concave toric domains naturally arise from toric actions on noncompact sets.

\end{example}

\begin{example} {\em Flexibility.}  For convex toric domains $X_{\Omega}$, determining the set of $a$ such that there exists a symplectic embedding
\begin{equation}
\label{eqn:flexibilityequation}
\op{int}(E(1,a)) \to \sqrt{\frac{a}{\op{vol}(X_{\Omega})}} \cdot \op{int}(X_{\Omega})
\end{equation}
is often subtle.  Here, by $\op{vol}(X_{\Omega})$ we mean twice the area of $\Omega$; the equation \eqref{eqn:flexibilityequation} then implies that the ellipsoid fills all of the volume of the target, so we call such an embedding a {\em full filling}. For example, let $\mathcal{T}$ be the trapezoid with vertices $(0,0), (0,1), (1,1)$ and $(2,0)$; this is the moment polytope for the {\em first Hirzebruch surface}.  The weight sequence of $X_{\mathcal{T}}$ is $(2;1).$  By combining Theorem~\ref{thm:keytheorem} with the algorithm from \cite[\S 2.3]{b}, there exists a symplectic embedding
\[ E\left(1,3 \cdot (49/30)^2\right) \to 49/30 \cdot \op{int}(X_{\mathcal{T}}),\]
in particular for $a = 3 \cdot (49/30)^2 \approx 8.0033$ a full filling of $\op{int}(X_{\mathcal{T}})$ as in \eqref{eqn:flexibilityequation} exists.  

However, for $8 \le a < 3 \cdot (49/30)^2$, no such full filling exists.  We can see this as follows.  First, as we explain in the appendix, we can compute the ECH capacities of $X_{\mathcal{T}}$ using Theorem~\ref{thm:echcapacities}.  We can also compute the ECH capacities of $E(a,b)$ by the formula we review in \S\ref{sec:echreview}, see \eqref{eqn:ellipsoidexample}.  In particular, we have
\[ c_{175}(X_{\mathcal{T}}) = 30, \quad c_{175}(E(1,8))  = 49,\]
so by the Monotonicity Axiom for ECH capacities \eqref{eqn:monotonicityequation} and the Scaling Axiom \eqref{eqn:scaling}, if for $a = 8$ there exists a symplectic embedding
\[ \op{int}(E(1,a)) \to \lambda \cdot \op{int}(X_{\mathcal{T}})\]
then we must have $\lambda \ge 49/30.$  In particular, if $a \ge 8$, then the same constraint on $\lambda$ must hold, so for $8 \le a < 3 \cdot (49/30)^2$ we can not have a full filling, as claimed.

\end{example}

\begin{example}  {\em   A sample calculation.}  We now work through a more extended example in detail, see Figure~\ref{fig:example}.   

\label{ex:extendedexample}

\begin{figure}[t]
\centering
\includegraphics[scale=1]{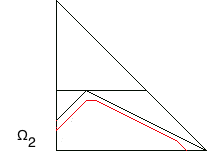}
\caption{The target for Example~\ref{ex:extendedexample}.  We have drawn the canonical decomposition given by the weight sequence (remember that the weight sequence for $\Omega_2$ gives a decomposition of the complement of $\Omega_2$ in a ball). The upper boundary of the inner approximation of $\Omega_2$ is also shown.}       
\label{fig:example}

\end{figure}

Let $\Omega_1$ be the domain whose upper boundary has vertices 
\[(0,10/3), (2/3,4/3),  (4/3,2/3), (7/3,0),\] 
and let $\Omega_2$ be the domain whose upper boundary has vertices 
\[(0,1), (1,2), (5,0).\]  
Then the weight expansion of $\Omega_1$ is $(2,2/3,2/3,1/3,1/3)$ and the weight expansion of $\Omega_2$ is $(5,3,2,1)$, see Figure~\ref{fig:example}.  

By Theorem~\ref{thm:keytheorem}, to see if $\op{int}(X_{\Omega_1})$ embeds into $\op{int}(X_{\Omega_2})$, it is equivalent to see if there is a ball packing
\begin{equation}
\label{eqn:theballpacking}
\op{int}\left(B(2/3) \sqcup B(2/3) \sqcup B(2) \sqcup B(1/3) \sqcup B(1/3) \sqcup B(3) \sqcup B(2) \sqcup B(1)\right) \to B(5).
\end{equation}

One can check, e.g. by applying the algorithm from \cite[\S 2.3]{b}, that in fact such a ball packing exists.  Hence, there is a symplectic embedding $\op{int}(X_{\Omega_1}) \to \op{int}(X_{\Omega_2})$.  In fact, this embedding is optimal (e.g. by \cite[\S 2.3]{b} again applied to \eqref{eqn:theballpacking}), in the sense that no larger scaling of $\op{int}(X_{\Omega_1})$ embeds into $\op{int}(X_{\Omega_2})$. 

To illustrate the concepts from the previous sections, note that there are five spheres in the chain of spheres corresponding to the blow up of $r \cdot \Omega_1$.  Each sphere corresponds to a blow up, and if we label these spheres in the order that they appear as edges of the outer approximation (with the first sphere the left most edge), and label the blow ups they correspond to accordingly, then the spheres, from left to right, have homology classes $E_1, E_2-E_1, E_3-E_2-E_4-E_5, E_4$ and $E_5-E_4$.  

There are four spheres in the chain of spheres corresponding to the blow up of $\Omega_2$ (including the sphere corresponding to the line at infinity).  If we label these spheres and the blowups with the same ordering convention as above, then they have homology classes $\widehat{E}_1, \widehat{E}_2-\widehat{E}_1-\widehat{E}_3, \widehat{E}_3$, and $L-\widehat{E}_2-\widehat{E}_3$.  

The cohomology class of the symplectic form on the blow up is given in this notation by
\begin{align}
[\omega_1]  & =  5L - (2/3)re_1-(2/3)re_2-2re_3 \nonumber \\
 & \qquad  -(1/3)re_4-(1/3)re_5 \nonumber \\
 & \qquad -\widehat{e}_1-3\widehat{e}_2-2\widehat{e}_3\nonumber \\
 & \qquad - \sum_{i=1}^5 \op{err}_i(\delta_1) e_i - \sum_{j=1}^3 \op{err}_j(\delta_2)\widehat{e}_j .
\end{align}

\end{example}

\section{ECH capacities give sharp obstructions to embeddings of concave domains into convex ones} 
\label{sec:sharpness}

In this section we prove Theorem~\ref{thm:maintheorem}.  We will first review the definition of ECH capacities in the cases that we need.  We will also review some formal properties that will be used in the proof.    

\subsection{Embedded contact homology}
\label{sec:echreview}

Let $Y$ be a closed oriented three-manifold.  A {\em contact form} on $Y$ is a one-form $\lambda$ satisfying
\[ \lambda \wedge d \lambda > 0.\] 
A contact form determines a canonical vector field $R$ by the equations
\[ \lambda(R) = 1, \quad d \lambda(R,\cdot) = 0.\]
The vector field $R$ is called the {\em Reeb vector field}, and the closed orbits of $R$, called {\em Reeb orbits}, are of considerable interest.  A contact form $\lambda$ is called {\em nondegenerate} if all Reeb orbits for $\lambda$ are cut out transversally, see \cite[\S 1.3]{hutchingslecture} for the precise definition. 

Let $(Y,\lambda)$ be a closed three-manifold with nondegenerate contact form.  The {\em embedded contact homology} of the pair $(Y,\lambda)$, denoted $ECH_*(Y,\lambda),$ is the homology of a chain complex $ECC_*(Y,\lambda).$  The chain complex $ECC_*(Y,\lambda)$ is freely generated over $\mathbb{Z}/2\mathbb{Z}$ by finite {\em orbit sets}
\[ \alpha = \lbrace (\gamma_i,m_i ) \rbrace,\]
where the $\gamma_i$ are distinct embedded Reeb orbits and the $m_i$ are positive integers, with the constraint that $m_i=1$ whenever $\gamma_i$ is hyperbolic.  (We could also define the chain complex over $\mathbb{Z}$, but for the applications in this paper we do not need this.)   The chain complex differential $d$ counts ``ECH index $1$" $J$-holomorphic curves in $\mathbb{R} \times Y$, for a generic compatible almost complex structure $J$.  The ECH index induces a grading $*$ on $ECC$ such that the differential decreases the grading by $1$.  Taubes has shown that there is a canonical isomorphism
\[ ECH_*(Y,\lambda) \cong \widehat{HM}^{-*}(Y),\]
where $\widehat{HM}$ denotes the {\em Seiberg-Witten Floer cohomology} defined by Kronheimer-Mrowka in \cite{km}.  In particular, the homology $ECH(Y,\lambda)$ depends neither on the choice of almost complex structure $J$, nor on $\lambda$, and so we sometimes denote it $ECH(Y)$.  For more details about the above, see \cite{hutchingslecture}.

Reeb orbits $\gamma$ have an action $\mathcal{A}(\gamma) = \int_\gamma \lambda$ which we can extend to a filtration on $ECH$.  Specifically, if $\alpha = \lbrace (\gamma_i,m_i) \rbrace $ is an orbit set, define the {\em action} of $\alpha$
\[ \mathcal{A}(\alpha) = \sum_i m_i \int_{\gamma} \lambda ,\]
and let $ECC^L(Y,\lambda)$ denote the subspace generated by orbit sets $\alpha$ with $\mathcal{A}(\alpha) < L.$  The differential restricts to $ECC^L$, so the homology $ECH^L(Y,\lambda)$ is well-defined and there is an inclusion induced map $ECH^L(Y,\lambda) \to ECH(Y)$.  If $\sigma$ is a nonzero class in $ECH$, we can define the action required to represent it by
\[ c_{\sigma}(\lambda) \eqdef \op{inf} \lbrace L \hspace{1mm} | \hspace{1 mm} \sigma \in \op{Im}\left(ECH^L(Y,\lambda) \to ECH(Y) \right) \hspace{1 mm} \rbrace. \] 
The number $c_{\sigma}(\lambda)$ is called the {\em spectral invariant} associated to $\sigma$.  When $\lambda$ is degenerate, we can still define $c_{\sigma}(\lambda)$ by taking the limit of $c_{\sigma}(\lambda_n)$ as $\lambda_n \to \lambda$ in $C^0$, see \cite{qech}. 

\subsection{ECH capacities}

We would like to use $ECH$ to define symplectic capacities.  This is most natural when $(X,\omega)$ is a symplectic $4$-manifold with boundary, such that $\omega=d \lambda$ and $\lambda |_{\partial X}$ is a contact form.  When $\partial X$ is oriented positively with respect to $\omega^2$, we call such an $(X,\omega)$ a {\em Liouville domain}.  For example, any concave or convex toric domain is a Liouville domain.    

In our case, where $X$ is a concave or convex toric domain, $\partial X = S^3$.  The embedded contact homology of $S^3$ has a canonical $\mathbb{Z}$ grading in this case, and $ECH_*(S^3)$ is given by 
\[ ECH_{2k}(S^3)_{2k} = \mathbb{Z}/2\mathbb{Z}, k \ge 0, \quad \quad ECH_*(S^3) = 0, \hspace{1 mm} \op{otherwise},\]
as explained in \cite{hutchingslecture}.  In particular, for each nonnegative integer $k$, there are canonical nonzero classes $\sigma_k$ in grading $2k$.  If $\omega$ is concave or convex, we now define the $k^{th}$ ECH capacity
\[ c_k(X,\omega) \eqdef c_{\sigma_k}(\lambda),\]
where $\lambda$ is the restriction of the standard one-form $\lambda_{std} = \frac{1}{2} \sum_i (x_i  dy_i - y_i dx_i)$ on $\mathbb{R}^4$ to $\partial X$.  One can modify this definition to define ECH capacities for any Liouville domain, and in fact ECH capacities can be defined for any symplectic $4$-manifold, see \cite{qech}.  

\begin{example}
\label{ex:ellipsoidexample}

The ECH capacities of the ellipsoid were computed in \cite{qech}.  The $k^{th}$ ECH capacity of the ellipsoid $E(a,b)$ is the $(k+1)^{st}$ smallest element in the matrix
\begin{equation}
\label{eqn:ellipsoidexample}
(ma+nb)_{ (m,n) \in \mathbb{Z}_{\ge 0} \times \mathbb{Z}_{\ge 0}}.
\end{equation}
For example, the ECH capacities of the ball $E(1,1)$ start with 
\[ 0, 1, 1, 2, 2, 2, 3, 3, 3, 3, \ldots .\]
We will give a formula for the ECH capacities of convex domains in terms of ECH capacities of balls in Theorem~\ref{thm:echcapacities}.
 
\end{example}

As mentioned in the introduction, the ECH capacities satisfy the key Monotonicity Property \eqref{eqn:monotonicityequation}.  To state another property that they satisfy, recall the {\em sequence summation} operation from \cite{qech}, defined for sequences $S$ and $T$ indexed starting at $k=0$ by
\[ (S \# T)_k = \op{sup}_{i + j = k} (S_i + T_j).\]
Here, the notation $A_i$ denotes the $i^{th}$ term of the sequence $A$.  We can now state the Disjoint Union axiom \cite{qech} for the sequence of ECH capacities $c_{ECH}$, which says that for Liouville domains $X_1, X_2$,
\begin{equation}
\label{eqn:disjointunion}
c_{ECH}(X_{\Omega_1} \coprod X_{\Omega_2} ) = c_{ECH}(X_{\Omega_1}) \# c_{ECH}(X_{\Omega_2}).
\end{equation}
Another useful axiom is the Scaling Axiom, also proved in \cite{qech}, which says that
\begin{equation}
\label{eqn:scaling}
 c_k (X, r \cdot \omega) = r \cdot c_k (X,\omega),
 \end{equation}
for any positive real number $r.$

\subsection{Sharpness for the ball packing problem implies sharpness for ECH capacities}

We now explain the proof of Theorem~\ref{thm:maintheorem}.  The key point is that it was shown in \cite{qech} that ECH capacities are known to give sharp obstructions to symplectic ball packing problems.

\begin{proof}[Proof of Theorem~\ref{thm:maintheorem}] 

Let $\Omega_1$ be concave and $\Omega_2$ convex.  We need to show that  $\op{int}(X_{\Omega_1})$ embeds into $\op{int}(X_{\Omega_2})$ if and only if $c_k(\op{int}(X_{\Omega_1})) \le c_k(\op{int}(X_{\Omega_2}))$ for all $k$.  The fact that a symplectic embedding
\[\op{int}(X_{\Omega_1}) \to \op{int}(X_{\Omega_2})\]
implies that 
\begin{equation}
\label{eqn:capacityassumption}
c_k(\op{int}(X_{\Omega_1})) \le c_k(\op{int}(X_{\Omega_2}))
\end{equation} 
for all $k$ follows from the Monotonicity property \eqref{eqn:monotonicityequation}.

{\em Step 1.} We first prove the converse assuming that $\Omega_1$ and $\Omega_2$ are rational.

By the Monotonicity Axiom \eqref{eqn:monotonicityequation}, the Disjoint Union property \eqref{eqn:disjointunion},  and the proof of the ``only if" direction of Theorem~\ref{thm:keytheorem}, we know that
\[(c_{ECH}(\op{int}(X_{\Omega_2})) \# c_{ECH}( \op{int}(\widehat{B}(\Omega_2))))_k\le c_{k}(B(b))\]
for all $k$.
We also know that for any $k$,  
\[ c_k(\op{int}(B(\Omega_1))) \le c_k(\op{int}(X_{\Omega_1})).\]
Since sequence sum against a fixed sequence respects inequalities, we can combine \eqref{eqn:capacityassumption} with the above inequalities to find that
\begin{equation}
\label{eqn:inequalitywewant}
c_k(\op{int}(B(\Omega_1)) \sqcup \op{int}(\widehat{B}(\Omega_2))) \le c_k(B(b))
\end{equation}
for all $k$.  It is known that ECH capacities give sharp obstructions to all (open) ball packings of a ball, see e.g. \cite{qech}.  Hence, \eqref{eqn:inequalitywewant} implies that there exists a symplectic embedding 
\[ \op{int}(B(\Omega_1)) \sqcup \op{int}(\widehat{B}(\Omega_2)) \to B(b).\]
Hence by Theorem~\ref{thm:keytheorem}, there exists a symplectic embedding
\[ \op{int}(X_{\Omega_1}) \to \op{int}(X_{\Omega_2}),\]
hence the theorem in the rational case.

{\em Step 2.}  We now deduce Theorem~\ref{thm:maintheorem} in general by using the result from the previous step.

Given $\Omega_1$ concave and $\Omega_2$ convex, for each $\lambda > 1$ we can find a rational concave set $\Omega'_1$ and a rational convex set $\Omega'_2$ such that 
\[ \frac{1}{\lambda} \Omega_1 \subset \op{int}(\Omega_1') \subset \Omega_1 ,\]
and
\[ \Omega_2 \subset \op{int}(\Omega'_2) \subset \lambda \Omega_2.\]
By combining the above inclusions with \eqref{eqn:monotonicityequation} and \eqref{eqn:capacityassumption}, it follows from the previous step that there is a symplectic embedding
\[ \op{int}(X_{\Omega'_1}) \to \op{int}(X_{\Omega'_2}).\]
Hence, by again applying the above inclusions, there is a symplectic embedding
\[ \frac{1}{\lambda^2} \cdot X_{\Omega_1} \to X_{\Omega_2}.\]
By letting $\lambda$ tend to $1$ and applying Corollary~\ref{cor:nonrational}, we therefore get a symplectic embedding
\[ \op{int}(X_{\Omega_1}) \to X_{\Omega_2},\]
which must necessarily have image in the interior of $X_{\Omega_2}.$

\end{proof}

\appendix

\section{Appendix (by Keon Choi and Daniel Cristofaro-Gardiner): The geometric meaning of ECH capacities of convex domains}
\label{sec:appendix}

\subsection{The main theorem} 

We assume below that the reader is familiar with the definitions and notation from the body of this paper.  There, the second author showed that ECH capacities give a sharp obstruction to embedding any concave toric domain into a convex one.  The basic idea of the proof was to show that a concave domain embeds into a convex one if and only if it is possible to symplectically embed a certain collection of balls into another ball.  This suggests that there should be a close relationship between the ECH capacities of concave or convex toric domains, and the ECH capacities of balls.  

In \cite{concave}, the authors and Frenkel, Hutchings and Ramos showed that ECH capacities of any concave toric domain are given by the ECH capacities of the disjoint union of the balls determined by the weight sequence of the domain, see Theorem~\ref{thm:concavetheorem} for the precise statement.  The purpose of this appendix is to prove a similar formula for convex domains.

To state our formula, we recall the {\em sequence subtraction} operation that is implicit in \cite{qech} and was first explicitly defined in \cite{blog}.  This is given for nondecreasing sequences $S$ and $T$, indexed starting at $0$ and with $T \le S$, by
\begin{equation}
\label{eqn:definitioninf}
(S - T)_k \eqdef \op{inf}_{l \ge 0} S_{k+l}-T_l.
\end{equation}
(Here, the notation $T \le S$ means that $T_i \le S_i$ for every index $i$.)  The operation $\#$ and $-$ are related by the inequalities
\begin{equation}
\label{eqn:sequenceoperations}
(S-T) \# T \le S \le (S\#T) -T. 
\end{equation}

For our purposes, the sequence subtraction operation is significant because of the following:

\begin{theorem}
\label{thm:echcapacities} 
Let $X_{\Omega}$ be a convex toric domain, let $b$ be the head of the weight expansion for $\Omega$, and let $b_i$ be the $i^{th}$ term in the negative weight expansion for $\Omega$.  
Then 
\begin{equation}
\label{eqn:geometricequation}
c_{ECH} (X_{\Omega}) = c_{ECH}(B(b)) - c_{ECH}(\coprod_i B(b_i)).
\end{equation} 
\end{theorem}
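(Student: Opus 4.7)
The proof splits into the two inequalities $c_{ECH}(X_\Omega) \le S$ and $S \le c_{ECH}(X_\Omega)$, where I write $S := c_{ECH}(B(b)) - c_{ECH}(\widehat{B}(\Omega))$.

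For the upper bound $c_{ECH}(X_\Omega) \le S$, inductively applying Traynor's trick (as in \cite[Lem.\ 1.8]{concave}) along the weight expansion of $\Omega$ produces a symplectic embedding
\[ \op{int}(X_\Omega) \sqcup \op{int}(\widehat{B}(\Omega)) \hookrightarrow \op{int}(B(b)). \]
The Monotonicity and Disjoint Union axioms of ECH capacities from \cite{qech} yield the sequence-sum inequality $c_{ECH}(X_\Omega) \# c_{ECH}(\widehat{B}(\Omega)) \le c_{ECH}(B(b))$, where $\#$ denotes sequence sum. The standard adjoint relation $A \# B \le C \iff A \le C - B$ then rearranges this precisely to $c_{ECH}(X_\Omega) \le S$.

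For the lower bound I would exploit the following chain of equivalences, valid for any rational concave toric domain $\Omega_1$: $\op{int}(X_{\Omega_1}) \hookrightarrow \op{int}(X_\Omega)$ iff (by Theorem~\ref{thm:keytheorem}) $\op{int}(B(\Omega_1)) \sqcup \op{int}(\widehat{B}(\Omega)) \hookrightarrow \op{int}(B(b))$ iff (by sharpness of ECH for ball packings of a ball from \cite{qech}) $c_{ECH}(B(\Omega_1)) \# c_{ECH}(\widehat{B}(\Omega)) \le c_{ECH}(B(b))$ iff (by the adjoint relation together with the identification $c_{ECH}(B(\Omega_1)) = c_{ECH}(X_{\Omega_1})$ from \cite{concave}) $c_{ECH}(X_{\Omega_1}) \le S$. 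Combining with Theorem~\ref{thm:maintheorem} yields the bi-implication
\[ c_{ECH}(X_{\Omega_1}) \le c_{ECH}(X_\Omega) \;\Longleftrightarrow\; c_{ECH}(X_{\Omega_1}) \le S \]
for every rational concave toric $\Omega_1$, so that $c_{ECH}(X_\Omega)$ and $S$ dominate exactly the same family of sequences arising from concave toric domains.

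To convert this bi-implication into the bound $S \le c_{ECH}(X_\Omega)$, I would argue by approximation: for each index $k$ and each $\epsilon > 0$, produce a rational concave toric domain $\Omega_1$ satisfying $c_{ECH}(X_{\Omega_1}) \le S$ and $c_k(X_{\Omega_1}) \ge S_k - \epsilon$. Monotonicity of ECH capacities then gives $c_k(X_\Omega) \ge S_k - \epsilon$, and the bound follows on sending $\epsilon \to 0$; the general (not necessarily rational) case is handled by approximating $\Omega$ by rational convex toric domains from above. To produce the required $\Omega_1$, pick a minimizer $l^* = l^*(k)$ in $S_k = c_{k+l^*}(B(b)) - c_{l^*}(\widehat{B}(\Omega))$, read off appropriate weights $(a_1, \dots, a_n)$ using the explicit combinatorial formula for $c_{ECH}$ of a disjoint union of balls, and realize those weights as the weight sequence of a short concave toric domain using the realizability construction in the examples of \S\ref{sec:examples}. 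The technical heart of the argument — and the main obstacle to the plan — is this last density claim, namely that the set of sequences of the form $c_{ECH}(\sqcup_i B(a_i))$ is componentwise sup-dense below $S$.
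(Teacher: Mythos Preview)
Your upper bound $c_{ECH}(X_\Omega) \le S$ is correct and is exactly Step~1 of the paper's proof: the embedding $X_\Omega \sqcup \op{int}(\widehat{B}(\Omega)) \to B(b)$ together with Monotonicity and Disjoint Union gives $c_{ECH}(X_\Omega)\#c_{ECH}(\widehat{B}(\Omega))\le c_{ECH}(B(b))$, and the adjoint relation rearranges this to the desired inequality.

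For the lower bound, however, there is a genuine gap, and you have correctly identified it yourself. The bi-implication you derive from Theorems~\ref{thm:maintheorem} and~\ref{thm:keytheorem} is valid (and there is no circularity: the proof of Theorem~\ref{thm:maintheorem} in \S\ref{sec:sharpness} does not use Theorem~\ref{thm:echcapacities}). But the passage from ``$c_{ECH}(X_\Omega)$ and $S$ dominate the same family of concave ECH sequences'' to ``$S \le c_{ECH}(X_\Omega)$'' requires the density claim, and you have not proved it. Your suggested construction --- pick a minimizer $l^*$, read off weights, realize them as a short concave domain --- does not by itself guarantee condition (1), namely that the \emph{entire} sequence $c_{ECH}(\sqcup_i B(a_i))$ lies below $S$, not just the $k$-th term. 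Arranging both conditions simultaneously is essentially as hard as the theorem itself; for instance, the naive attempt of scaling a single ball $B(\lambda b)$ fails because $S_j/c_j(B(b))$ need not be maximized at $j=k$.

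The paper avoids this obstacle entirely by taking a direct combinatorial route: it interpolates the lattice-path quantity $\min\{\ell_\Omega(\Lambda):\check{\mathcal L}(\Lambda)\ge k+1\}$ between $S_k$ and $c_k(X_\Omega)$. The inequality $S_k \le \min\{\ell_\Omega(\Lambda)\}$ comes from decomposing any convex lattice path $\Lambda$ into three pieces according to the slopes, matching the first-step weight decomposition $\Omega_1,\Omega_2,\Omega_3$, and invoking \cite[Thm.~1.21]{concave}. The inequality $\min\{\ell_\Omega(\Lambda)\}\le c_k(X_\Omega)$ comes from dilating $\Omega$ about an interior point to obtain a disk bundle over $T^2$, applying the closed-path formula \cite[Thm.~1.11]{qech}, and discarding edges with the wrong slope range. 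Neither step uses the embedding theorems of the body of the paper; the argument is self-contained within the combinatorics of lattice paths and the axioms from \cite{qech} and \cite{concave}.
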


Note that it follows from the Monotonicity and Scaling axioms that $c_k(X_{\Omega})=c_k(\op{int}(X_{\Omega}))$ for any convex toric domain $X_{\Omega}$.  Note also that even when $\Omega$ is not rational, the above formula still makes sense, see \cite[Rmk. 1.6]{concave}.  For the formula for the ECH capacities of a ball, see Example~\ref{ex:ellipsoidexample}.

\begin{remark}
\label{rmk:whymin}

If $T \le S$, and  
\[ \op{lim}_{i \to \infty} S_i - T_i = + \infty,\]
write $T < S.$  If $T < S$, then we are justified in replacing the infimum in \eqref{eqn:definitioninf} with a minimum.  When $X_1$ and $X_2$ are Liouville domains with all ECH capacities finite,  and $T=c_{ECH}(X_1)$ and $S=c_{ECH}(X_2)$ are sequences of ECH capacities, we have $T < S$ whenever $\op{vol}(X_1) < \op{vol}(X_2)$.  This follows from \cite[Thm. 1.1]{vc}.  
  
\end{remark}

We can regard Theorem~\ref{thm:echcapacities} as expressing a fundamental limitation of the strength of ECH capacities of convex domains.  For example, we have:

\begin{corollary}
\label{cor:limitation}
Let $\Omega$ be convex and let $X$ be any Liouville domain with all ECH capacities finite.   Let $b$ be the head of the weight expansion for $\Omega$, and let $b_i$ be the $i^{th}$ term in the negative weight expansion for $\Omega$.  If we have
\begin{equation}
\label{eqn:assumedembedding} 
c_k (X \sqcup (\coprod_i B(b_i))) \le c_k(B(b))
\end{equation}
for all $k$, then we must have 
\begin{equation}
\label{eqn:desiredequation}
c_k(X) \le c_k(X_{\Omega})
\end{equation}
for all $k$.  

\end{corollary}

\begin{proof}

By combining the Disjoint Union axiom \eqref{eqn:disjointunion} and \eqref{eqn:assumedembedding}, we have
\[ c_{ECH}(X) \# c_{ECH}( \coprod_i B(b_i) ) \le c_{ECH}(B(b)).\]
Now subtract $c_{ECH}(\coprod_i B(b_i))$ from both sides of this equation and apply \eqref{eqn:sequenceoperations} to get 
\[c_{ECH}(X) \le c_{ECH}(B(b)) - c_{ECH}(\coprod_i B(b_i)).\]  
Now apply Theorem~\ref{thm:echcapacities} to get \eqref{eqn:desiredequation}.

\end{proof}

\begin{remark}

The analogue of Theorem~\ref{thm:echcapacities} was proved in the concave case in \cite{concave}.  There, the authors show:

\begin{theorem}\cite[Thm. 1.4]{concave}

\label{thm:concavetheorem}

Let $\Omega$ be concave, and let $a_i$ be the $i^{th}$ weight of $\Omega$.  Then
\begin{equation}
\label{eqn:concaveformula}
c_k(X_{\Omega}) = c_k ( \coprod_i B(a_i) ).
\end{equation}

\end{theorem}

The equation \eqref{eqn:concaveformula} will be used in the proof of Theorem~\ref{thm:echcapacities}.

\end{remark}

\subsection{Lattice points and $\Omega$-lengths}

The ECH capacities of concave and convex domains are related to certain lattice point counts.  We now introduce the terms we need to make this precise.

We first define the upper boundaries of the regions we need to consider.

\begin{definition}
\label{defn:lambdalength}

Let $\Lambda:[0,c] \to \mathbb{R}^2$ for some $c \ge 0$ be a polygonal path in the plane, with vertices at lattice points.  Assume that the tangent $\Lambda'$ is nonzero on $[0,c] \setminus \{0 = c_0 < \cdots < c_n = c\}$, where the $\Lambda(c_i)$ are the vertices of $\Lambda$.  In addition, for any nonzero vector $v \in \mathbb{R}^2$, let $\theta(v)$ be the number $\theta \in [0,2\pi)$ so that $v$ is a positive multiple of $(\sin \theta, \cos \theta)$.
\begin{itemize}
\item An {\em edge} of $\Lambda$ is the displacement vector between consecutive vertices of $\Lambda$.  

\item $\Lambda$ is a {\em lattice path} if its vertices are lattice points and  $\Lambda(0)=(0,y(\Lambda))$ and $\Lambda(c) = (x(\Lambda),0)$ for nonnegative integers $x(\Lambda)$ and $y(\Lambda)$.
\item $\Lambda$ is {\em concave} if $\theta(\Lambda')$ is nonincreasing and takes values in $(\pi/2,\pi)$.
\item $\Lambda$ is {\em convex} if $\theta(\Lambda')$ is nondecreasing and takes values in $(0,3\pi/2)$.
\end{itemize}
\end{definition}

The paths $\Lambda$ have an $\Omega$-length, defined by the domain $\Omega$, which will also be important.

\begin{definition}
Let $X_\Omega$ be a convex toric domain and $\Lambda$ a convex lattice path. If $\nu$ is any vector in $\mathbb{R}^2$, let $p_{\Omega,\nu}$ be a point on the boundary of $\Omega$ such that $\Omega$ lies entirely in the ``right half-plane'' of the line through $p_{\Omega,\nu}$ in the direction $\nu$.  More precisely, for any $p \in \Omega$, we have 
\begin{equation}
\label{eqn:tangentequation}
(p-p_{\Omega,\nu}) \times \nu \ge 0
\end{equation} 
where $\times$ denotes the cross product. Define
\begin{equation}
\label{eqn:defnv} 
\ell_{\Omega}(\nu) = \nu \times p_{\Omega,v},
\end{equation}
and if $\Lambda$ is a convex lattice path, define
\begin{equation}
\label{eqn:definell}
 \ell_{\Omega}(\Lambda) = \sum_{\nu \in \op{Edges}(\Lambda)} \ell_{\Omega}(\vu).
\end{equation}
If $X_\Omega$ is a concave toric domain and $\Lambda$ is a concave lattice path, $\ell_{\Omega}(\Lambda)$ is defined by \eqref{eqn:defnv} and \eqref{eqn:definell}, where $p_{\Omega,\nu}$ is a point on the boundary of $\Omega^c:=[0,\infty)^2 \setminus \Omega$ so that $\Omega^c$ lies entirely on the ``left half-plane'' of the line through $p_{\Omega,\nu}$ in the direction $\nu$.
\end{definition}

We will also want to count lattice points in regions bounded by $\Lambda$.  We now make this precise.  

\begin{definition}\label{def:latticecount}
If $\Lambda$ is a convex lattice path, let $\widehat{\mathcal{L}}_{\Omega}(\Lambda)$ denote the count of lattice points in the region enclosed by $\Lambda$ and the axes, including all the lattice points on the boundary. If $\Lambda$ is a concave lattice path, let $\widecheck{\mathcal{L}}(\Lambda)$ denote the number of lattice points in the region enclosed by $\Lambda$ and the axes, not including lattice points on $\Lambda$ itself.
\end{definition}

\begin{example}

Using this terminology, we can state an alternative formula for the ECH capacities of concave toric domains.  Namely, we have:

\begin{theorem}\cite[Thm. 1.21]{concave}  

Let $\Omega$ be concave.  Then
\begin{equation}
\label{eqn:concaveformulalength}
c_k(X_{\Omega}) = \op{max} \lbrace \ell_{\Omega} (\Lambda) | \widecheck{\mathcal{L}}(\Lambda) = k \rbrace,
\end{equation}
where the maximum runs over all concave lattice paths.

\end{theorem}

We will also use this fact in the proof.

\end{example}

\begin{remark}
\label{rmk:itsanorm}

When $\Omega$ is convex, $\ell_{\Omega}$ is in fact a (non-symmetric) norm: it satisfies the scaling axiom $\ell_{\Omega}(c \cdot v) = c \cdot \ell_{\Omega}(v)$ whenever $c \ge 0$, and it satisfies the triangle inequality
\begin{equation}
\label{eqn:triangleinequality}
\ell_{\Omega}(v+w) \le \ell_{\Omega}(v) + \ell_{\Omega}(w).
\end{equation}
To see why \eqref{eqn:triangleinequality} holds, for a fixed $v \in \mathbb{R}^2$ consider the function
\[ x \to v \cdot x\]
on $\mathbb{R}^2$.  This is maximized over $\Omega$ on $\partial \Omega$, at points $x$ at which $v$ is normal to $\Omega$ and pointing outward.  (When $\partial \Omega$ is not smooth, we consider any vector $\nu$ such that \eqref{eqn:tangentequation} holds a tangent vector, and we consider any vector normal to a tangent vector a normal vector.)  It follows that $x \to \nu \times x$ is maximized at the point $p_{\Omega,\nu}$ from Definition~\ref{defn:lambdalength}.  We therefore have
\[ \ell_{\Omega}(v+w) = (v+w) \times p_{\Omega,v+w} \le v \times p_{\Omega,v} + w \times p_{\Omega,w},\]
hence \eqref{eqn:triangleinequality}.                 

\end{remark}

\subsection{ECH capacities of convex domains}
 
We can now give the proof of the main theorem of this appendix.

\begin{proof}[Proof of Theorem~\ref{thm:echcapacities}.]
Recall from \S\ref{sec:weightsequences} that the first step of the weight expansion for $X_{\Omega}$ determines regions $\Omega_1,\Omega_2$ and $\Omega_3$ such that $X_{\Omega_1}$ is a $B(b)$ and $X_{\Omega_2}$ and $X_{\Omega_3}$ are concave toric domains. For a given $k \ge 0$, we claim a series of inequalities
\begin{align}\label{eqn:ineq}
c_k(X_\Omega)  &\le \min_{k_1 - l = k} \{c_{k_1}(X_{\Omega_1}) - c_{l}( \coprod_i B(b_i))\} \nonumber \\
&= \min_{k_1-k_2-k_3 = k} \{ c_{k_1} (X_{\Omega_1}) - c_{k_2}(X_{\Omega_2}) - c_{k_3}(X_{\Omega_3})\} \nonumber \\ 
&\le \min\{\ell_\Omega(\Lambda) \hspace{1 mm} | \hspace{1 mm} \widehat{\mathcal{L}}(\Lambda) \ge k+1\} \\
&\le c_k(X_\Omega), \nonumber
\end{align}
which proves the theorem.  (We are justified in writing a minimum rather than an infimum throughout, by Remark~\ref{rmk:whymin}.)   Here and throughout the proof, $k_1,k_2,k_3$ and $l$ denote nonnegative integers.  We now explain the proofs of the above inequalities.

{\em Step 1.}   By the definition of the weight expansion, there is a symplectic embedding
\[ X_{\Omega} \sqcup (\coprod_i \op{int} (B(b_i)) \to B(b).\]
It then follows from the Monotonicity axiom \eqref{eqn:monotonicityequation} and the Disjoint Union property \eqref{eqn:disjointunion} that for any $k_1$ and $l$
\[ c_{k_1}(X_{\Omega}) + c_l(\coprod_i B(b_i)) \le c_{k_1 + l}(B(b)).\]
This proves the first inequality of \eqref{eqn:ineq}.

{\em Step 2.} Since the weights of $\Omega_2$ and $\Omega_3$ collectively correspond to the negative weights of $\Omega$, we have
\[ \max_{k_2+k_3 = l}\{c_{k_2}(X_{\Omega_2}) + c_{k_3}(X_{\Omega_3})\} = \max_{\sum l_i=l}\sum c_{l_i}(B(b_i)) = c_l(\coprod_i B(b_i)) \]
by \eqref{eqn:concaveformula}.  This proves the equality on the second line of \eqref{eqn:ineq}.

{\em Step 3.} To prove the third inequality of \eqref{eqn:ineq}, given any convex lattice path $\Lambda$ with 
$\widehat{\mathcal{L}}(\Lambda) - 1 = k_0 \ge k$, we show how to choose $k_1,k_2$ and $k_3$ with $k_1 - k_2-k_3 = k$ so that 
\begin{equation}\label{eqn:step2}
 \ell_\Omega(\Lambda) \ge c_{k_1}(X_{\Omega_1}) - c_{k_2}(X_{\Omega_2}) - c_{k_3}(X_{\Omega_3}).
\end{equation}
Write $\Lambda$ as a concatenation $\tilde\Lambda_2\tilde\Lambda_1\tilde\Lambda_3$ of paths so that $\theta(\tilde\Lambda_2')$, $\theta(\tilde\Lambda_1')$ and $\theta(\tilde\Lambda_3')$ take values in $(0,3\pi/4)$, $\{3\pi/4\}$ and $(3\pi/4,3\pi/2)$, respectively. As in the definition of the weight expansion, $\tilde\Lambda_2$ and $\tilde\Lambda_3$ are affine equivalent to concave lattice paths $\Lambda_2$ and $\Lambda_3$, respectively. Also, let $\Lambda_1$ denote the linear path from $(0,a)$ to $(a,0)$ extending $\tilde\Lambda_1$. We take $k_2=\mathcal{\widecheck L}(\Lambda_2)$, $k_3=\mathcal{\widecheck L}(\Lambda_3)$ and $k_1 = k + k_2 + k_3$. Observe that $\mathcal{\widehat L}(\Lambda_1) -1 = k_0 +k_2 + k_3 \ge k_1$. 

By \eqref{eqn:ellipsoidexample} and the fact that the ECH capacities of any symplectic manifold are nondecreasing, we then have
\[ \ell_{\Omega_1}(\Lambda_1)=c_{k_0+k_2+k_3}(B(b)) \ge c_{k_1}(B(b)). \]
By \eqref{eqn:concaveformulalength},
\[ \ell_{\Omega_2}(\Lambda_2) \le c_{k_2}(X_{\Omega_2}) \]
and
\[ \ell_{\Omega_3}(\Lambda_3) \le c_{k_3}(X_{\Omega_3}). \]
Moreover, by the argument in Step 4 of \cite[\S 2.1]{concave},
\[ \ell_{\Omega}(\Lambda)= \ell_{\Omega_1}(\Lambda_1) - \ell_{\Omega_2}(\Lambda_2)-\ell_{\Omega_3}(\Lambda_3). \]
We substitute the previously obtained bounds into the above to obtain \eqref{eqn:step2}.

{\em Step 4.}  Consider a dilation $\tilde\Omega$ of $\Omega$ by a factor $\lambda <1$ about an interior point of $\Omega$. Then, $X_{\tilde\Omega}$ is a disk bundle over $T^2$, and by \cite[Thm. 1.11]{qech}, there is a closed convex path $\tilde\Lambda$ with corners on lattice points so that $\mathcal{\widehat L}(\tilde\Lambda) = k+1$ and $\ell_{\tilde\Omega}(\tilde\Lambda) = c_k(\tilde\Omega)$. Here, $\mathcal{\widehat L}(\tilde\Lambda)$ denotes the number of lattice points in the region enclosed by $\tilde\Lambda$, including the ones on the boundary, and $\ell_{\tilde\Omega}(\tilde\Lambda)$ is defined by \eqref{eqn:definell} as in the case of a convex domain.

Consider the part $\Lambda$ of the path $\tilde\Lambda$ consisting only of edges with $0 < \theta(\nu) < 3\pi/2$. Then $\Lambda$ is a convex lattice path (after translation if necessary) with $\mathcal{\widehat L}(\Lambda) \ge k+1$ and $\ell_{\tilde\Omega}(\tilde\Lambda) = \lambda \ell_\Omega(\Lambda)$. Hence, by the Monotonicity axiom,
\[ \ell_\Omega(\Lambda) = c_k(\tilde\Omega)/\lambda \le c_k(\Omega)/ \lambda \]
Taking the limit as $\lambda \to 1$ proves the last inequality. 
\end{proof}

We close with the following analogue of the formula from \cite[Thm. 1.11]{qech}.
\begin{corollary}
\label{cor:convexcorollary}
Let $\Omega$ be a convex toric domain.  Then 
\[ c_k(X_{\Omega}) = \op{min} \lbrace \ell_{\Omega}(\Lambda) | \mathcal{\widehat L}(\Lambda) = k+1 \rbrace,\]
where the minimum is over convex lattice paths $\Lambda$.
\end{corollary}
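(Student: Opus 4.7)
The plan is to derive Corollary~\ref{cor:convexcorollary} as a direct consequence of the proof of Theorem~\ref{thm:echcapacities}. Inspecting the chain of inequalities \eqref{eqn:ineq}, one sees that all four quantities displayed there are in fact equal, so the third line of \eqref{eqn:ineq} already gives the geometric identity
\[
c_k(X_\Omega) \;=\; \min\bigl\{\ell_\Omega(\Lambda) : \Lambda \text{ a convex lattice path with } \mathcal{\check L}(\Lambda) \ge k+1\bigr\}.
\]
Thus Corollary~\ref{cor:convexcorollary} reduces to the claim that this minimum is unchanged when one restricts to convex lattice paths with $\mathcal{\check L}(\Lambda) = k+1$ exactly.

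The inequality $c_k(X_\Omega) \le \min\{\ell_\Omega(\Lambda) : \mathcal{\check L}(\Lambda) = k+1\}$ is immediate, since the second minimum ranges over a subfamily. For the reverse direction, I would prove that any convex lattice path $\Lambda$ with $\mathcal{\check L}(\Lambda) > k+1$ admits a modification $\Lambda'$ with $\mathcal{\check L}(\Lambda') = \mathcal{\check L}(\Lambda) - 1$ and $\ell_\Omega(\Lambda') \le \ell_\Omega(\Lambda)$; iterating this reduction produces a path with exactly $k+1$ enclosed lattice points and no greater $\ell_\Omega$-length.

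The guiding principle is the monotonicity of $\ell_\Omega$ under inclusion of enclosed regions: because each edge contribution $\overrightarrow{\nu} \times p_{\Omega,\nu}$ depends only on the direction of $\overrightarrow{\nu}$ through the support point $p_{\Omega,\nu}$, shrinking the region enclosed by a convex lattice path can only decrease $\ell_\Omega$. To carry out the reduction, I would pick a lattice point $p$ of the enclosed region that lies on the convex hull of the enclosed lattice points, and replace $\Lambda$ by the convex lattice path bounding the convex hull of the remaining enclosed lattice points, suitably completed by axis segments.

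The main obstacle is showing that a single reduction step can always be arranged to drop $\mathcal{\check L}$ by exactly one rather than by more. Degenerate configurations, such as many collinear lattice points along an edge of $\Lambda$, can cause the convex-hull removal to eliminate several lattice points at once. I expect this to be handled by first looking for an edge of $\Lambda$ containing an interior lattice point (which can be removed individually by an inward parallel shift of the edge by one lattice step), and only if no such edge exists, appealing to a finite combinatorial case analysis on the extremal lattice points of the convex hull of the enclosed region to exhibit a uniquely removable one, thereby closing the induction.
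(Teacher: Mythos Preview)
Your opening observation is correct: the chain \eqref{eqn:ineq} collapses to a string of equalities, so the proof of Theorem~\ref{thm:echcapacities} already yields
\[
c_k(X_\Omega)=\min\{\ell_\Omega(\Lambda):\mathcal{\check L}(\Lambda)\ge k+1\},
\]
and what remains is to pass from $\mathcal{\check L}(\Lambda)\ge k+1$ to $\mathcal{\check L}(\Lambda)=k+1$.

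This is where your approach and the paper's diverge. The paper does not touch the combinatorics of lattice paths at all; it disposes of the remaining step in one line by invoking the monotonicity of ECH capacities, $c_k(X_\Omega)\le c_{k'}(X_\Omega)$ for $k\le k'$. By contrast, you attempt a direct reduction on paths: given $\Lambda$ with $\mathcal{\check L}(\Lambda)>k+1$, produce $\Lambda'$ with one fewer enclosed lattice point and $\ell_\Omega(\Lambda')\le\ell_\Omega(\Lambda)$. That is a genuinely different, and considerably longer, route.

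The difficulty is that your reduction is not actually carried out. The ``inward parallel shift of an edge by one lattice step'' is not a well-defined operation on the convex lattice paths of the appendix: such a path must run from $(0,y(\Lambda))$ to $(x(\Lambda),0)$ with $\theta(\Lambda')$ nondecreasing in $(0,3\pi/2)$, and shifting a single interior edge generally breaks either the endpoint condition or convexity. Your alternative, deleting an extremal lattice point and passing to the convex hull of what remains, can drop $\mathcal{\check L}$ by more than one, as you yourself note, and the promised ``finite combinatorial case analysis'' to repair this is neither formulated precisely nor executed. So the proposal has a real gap at exactly the point where it departs from the paper, and the paper's monotonicity argument shows the whole combinatorial detour can be avoided.
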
  

\begin{proof}
As part of the proof of Theorem~\ref{thm:echcapacities}, we saw that this formula holds when the minimum is taken over $\Lambda$ with $\mathcal{\widehat L}(\Lambda) \ge k+1$.  Thus, to prove the theorem, it suffices to show that given any lattice path $\Lambda$, there is another lattice path $\Lambda'$ with 
\begin{equation}
\label{eqn:neededequations}
\mathcal{\widehat L}(\Lambda') = \mathcal{\widehat L}(\Lambda) - 1, \quad \quad \ell_{\Omega}(\Lambda') \le \ell_{\Omega}(\Lambda).
\end{equation}
We can define such a path by using an analogue of the ``corner rounding" operation from 
\cite{hutchings}: specifically, given $\Lambda$ we define $\Lambda'$ by choosing any vertex other than the origin, removing the lattice point corresponding to that edge, and taking the convex hull of the remaining lattice points.  This satisfies the first equation in \eqref{eqn:neededequations} by definition, and it satisfies the second equality by the triangle inequality \eqref{eqn:triangleinequality}.

\end{proof}

\end{document}